\numberwithin{equation}{section}
\newtheorem{theorem}{Theorem}[section]
\newtheorem{proposition}[theorem]{Proposition}
\newtheorem{corollary}[theorem]{Corollary}
\newtheorem{lemma}[theorem]{Lemma}
\begin{document}
\begin{center} {\Large A Splitting Theorem for Local Cohomology \\
 and its Applications}
 \footnote [1]{\noindent{\bf Key words and phrases:} Local cohomology, split exact sequence, generalized Cohen-Macaulay module, group of extensions.\\
\indent{\bf AMS Classification 2010:} 13D45; 13H10.\\
This work is supported in part by NAFOSTED (Vietnam).}
\end{center}
\begin{center}
                \textsc{Nguyen Tu Cuong and Pham Hung Quy}\\
\end{center}
\begin{abstract}
Let $R$ be a commutative Noetherian ring and $M$  a finitely
generated $R$-module. We show in this paper that, for  an integer
$t$,  if
 the local cohomology module $H^{i}_\mathfrak{a}(M)$ with respect to an ideal
 $\frak a$ is finitely generated for all $i<t$, then
 $$H^{i}_\mathfrak{a}(M/xM)\cong H^{i}_\mathfrak{a}(M)\oplus H^{i+1}_\mathfrak{a}(M)$$
 for all $\frak a$-filter
regular  elements $x$ containing in a enough large power of $\frak
a$ and all $i<t-1$. As consequences we obtain generalizations, by
very short proofs, of the main results of M. Brodmann and A.L.
Faghani (A finiteness result for associated primes of local
cohomology modules,
 Proc. Amer. Math. Soc., 128(2000), 2851-2853)
 and of H.L. Truong and the first author (Asymptotic behavior of parameter ideals in generalized Cohen-Macaulay
module,  J. Algebra, 320(2008),158-168).
\end{abstract}
\section{Introduction}

A finitely generated module $M$  of dimension $d>0$ over a
Noetherian local ring $(R, \mathfrak{m})$ is called a generalized
Cohen-Macaulay module (see \cite{CST}), if there exists a positive
integer $k$ such that $\mathfrak{m}^{k}H^{i}_\mathfrak{m}(M) = 0$
for all $i <d$, where $H^{i}_\mathfrak{m}(M)$ is the $i$-th local
cohomology module of $M$ with respect to the maximal ideal $\frak
m$. Then the following split property of local cohomology modules
is useful in the theory of generalized Cohen-Macaulay modules  (see \cite{S}): For
a parameter element $x$ of $M$ there exists a enough large integer
$n$ such that
 $H^{i}_\mathfrak{m}(M/x^nM)\cong H^{i}_\mathfrak{m}(M)\oplus
H^{i+1}_\mathfrak{m}(M)$ for all $i<d-1$. It should be noted here
that this integer $n$ is in general dependent on the choice of the
element
$x$. It raises to the following natural question.\\
{\bf Question.} Let $M$ be a generalized Cohen-Macaulay module.
Does there exist a positive integer $n$
 such that  for any parameter element $x $ of $M$ containing in $\mathfrak{m}^n$,
 it holds $H^{i}_\mathfrak{m}(M/xM)\cong H^{i}_\mathfrak{m}(M)\oplus
H^{i+1}_\mathfrak{m}(M)$ for all $i<d-1$?

The purpose of this paper is not only to find an answer to this
question but also to prove a more general split property of local
cohomology modules as follows. Let $R$ be a Noetherian ring ($R$
is not necessary to be a local ring) and $\frak a$ an ideal of
$R$. An element $x\in \frak a$ is called an $\frak a$-filter
regular element of $M$ if $x \notin \mathfrak{p}$ for all
$\mathfrak{p} \in \mathrm{Ass}M\setminus V(\mathfrak{a})$, where
$V(\mathfrak{a})$ is the set of all prime ideals of $R$ containing
$\mathfrak{a}$.

\begin{theorem}\label{th}
Let $M$ be a finitely generated module over a Noetherian ring $R$
and  $\mathfrak{a}$ an ideal of  $R$. Let $t, n_0$ be positive
integers such that $\mathfrak{a}^{n_0}H^{i}_\mathfrak{a}(M) = 0$
for all $i<t$. Then, for all  $\mathfrak{a}$-filter regular
element $x \in \mathfrak{a}^{2n_0}$ of $M$, it holds
$$H^{i}_\mathfrak{a}(M/xM)\cong H^{i}_\mathfrak{a}(M)\oplus H^{i+1}_\mathfrak{a}(M),$$
 for all $i<t-1$, and
$$0:_{H^{t-1}_\mathfrak{a}(M/xM)}\mathfrak{a}^{n_0} \cong
H^{t-1}_\mathfrak{a}(M)\oplus
0:_{H^{t}_\mathfrak{a}(M)}\mathfrak{a}^{n_0}.$$
\end{theorem}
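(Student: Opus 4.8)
The plan is to reduce the theorem, for each $i\le t-1$, to the splitting of a single short exact sequence of local cohomology modules, and then to obtain that splitting by comparing the exact sequence attached to $x$ with a ``matricial'' one coming from a presentation of $x$ through $\mathfrak{a}^{n_0}$. For the reductions, put $U=0:_M x$; since $x$ is $\mathfrak{a}$-filter regular, $U$ is $\mathfrak{a}$-torsion, hence $U\subseteq H^0_\mathfrak{a}(M)$, so $\mathfrak{a}^{n_0}U=0$ and $H^i_\mathfrak{a}(U)=0$ for $i\ge1$. With $M'=M/U$, multiplication by $x$ is an injection $\bar x\colon M'\hookrightarrow M$ with cokernel $M/xM$, and $M\to M'$ induces isomorphisms $H^i_\mathfrak{a}(M)\xrightarrow{\ \sim\ }H^i_\mathfrak{a}(M')$ for $i\ge1$; moreover, as $x\in\mathfrak{a}^{2n_0}\subseteq\mathfrak{a}^{n_0}$, multiplication by $x$ (indeed by any element of $\mathfrak{a}^{n_0}$) annihilates $H^i_\mathfrak{a}(M)$ for $i<t$. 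Feeding this into the long exact sequence of $\eta\colon0\to M'\xrightarrow{\bar x}M\xrightarrow{p}M/xM\to0$ shows that $p$ induces a monomorphism $H^i_\mathfrak{a}(M)\hookrightarrow H^i_\mathfrak{a}(M/xM)$ for $i<t$, that for $i<t-1$ the sequence
$$0\to H^i_\mathfrak{a}(M)\to H^i_\mathfrak{a}(M/xM)\xrightarrow{\delta_i}H^{i+1}_\mathfrak{a}(M')\to0,\qquad H^{i+1}_\mathfrak{a}(M')\cong H^{i+1}_\mathfrak{a}(M),$$
is exact, and that applying $\operatorname{Hom}_R(R/\mathfrak{a}^{n_0},-)$ at $i=t-1$ (using $x\in\mathfrak{a}^{n_0}$) gives the exact sequence
$$0\to H^{t-1}_\mathfrak{a}(M)\to 0:_{H^{t-1}_\mathfrak{a}(M/xM)}\mathfrak{a}^{n_0}\xrightarrow{\delta_{t-1}}0:_{H^{t}_\mathfrak{a}(M)}\mathfrak{a}^{n_0}.$$
So it suffices to split these sequences and to see that this last $\delta_{t-1}$ is surjective.

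The decisive step — the only place where the hypothesis $x\in\mathfrak{a}^{2n_0}$ is used beyond $x\in\mathfrak{a}^{n_0}$ — is to fix a factorization $x=\sum_{k=1}^m a_kb_k$ with $a_k,b_k\in\mathfrak{a}^{n_0}$. Then $\beta\colon M\to M^m$, $z\mapsto(b_1z,\dots,b_mz)$, kills $U$ (since $\mathfrak{a}^{n_0}U=0$), hence induces $\bar\beta\colon M'\to M^m$; with $\alpha\colon M^m\to M$, $(z_k)\mapsto\sum a_kz_k$, one has $\alpha\circ\bar\beta=\bar x$, so $\bar\beta$ is injective. Put $C=\operatorname{coker}(\bar\beta)$, let $\bar\alpha\colon C\to M/xM$ be the map induced by $\alpha$, and observe that $(\operatorname{id}_{M'},\alpha,\bar\alpha)$ is a morphism of short exact sequences from $\xi\colon0\to M'\xrightarrow{\bar\beta}M^m\to C\to0$ to $\eta$. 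Since the maps induced by $\bar\beta$ and $\alpha$ on $H^i_\mathfrak{a}$ are assembled from multiplications by the $a_k,b_k\in\mathfrak{a}^{n_0}$, both vanish for $i<t$; hence for $i<t-1$ the long exact sequence of $\xi$ yields the exact sequence $0\to H^i_\mathfrak{a}(M)^m\xrightarrow{j_i}H^i_\mathfrak{a}(C)\xrightarrow{\partial_i}H^{i+1}_\mathfrak{a}(M')\to0$.

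Now I would chase the induced morphism of long exact sequences. Writing $g_i\colon H^i_\mathfrak{a}(C)\to H^i_\mathfrak{a}(M/xM)$ for the map induced by $\bar\alpha$, naturality gives $g_i\circ j_i=0$ (as $\alpha$ induces $0$ on $H^i_\mathfrak{a}$) and $\delta_i\circ g_i=\partial_i$. The first relation lets one factor $g_i=\tilde g_i\circ\partial_i$ for a unique $\tilde g_i\colon H^{i+1}_\mathfrak{a}(M')\to H^i_\mathfrak{a}(M/xM)$; the second then forces $\delta_i\circ\tilde g_i=\operatorname{id}$, so $\tilde g_i$ splits the sequence for $\eta$ and $H^i_\mathfrak{a}(M/xM)\cong H^i_\mathfrak{a}(M)\oplus H^{i+1}_\mathfrak{a}(M)$ for $i<t-1$. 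At $i=t-1$ the only change is that the map induced by $\bar\beta$ on $H^{t}_\mathfrak{a}$ need not vanish; but under $H^{t}_\mathfrak{a}(M')\cong H^{t}_\mathfrak{a}(M)$ its kernel is $0:_{H^{t}_\mathfrak{a}(M)}(b_1,\dots,b_m)\supseteq0:_{H^{t}_\mathfrak{a}(M)}\mathfrak{a}^{n_0}$, and the same factorization produces $\tilde g_{t-1}$ from that kernel into $H^{t-1}_\mathfrak{a}(M/xM)$ with $\delta_{t-1}\circ\tilde g_{t-1}$ equal to the inclusion into $0:_{H^{t}_\mathfrak{a}(M)}x=\operatorname{im}\delta_{t-1}$. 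Restricting $\tilde g_{t-1}$ to $0:_{H^{t}_\mathfrak{a}(M)}\mathfrak{a}^{n_0}$, its image lies in $0:_{H^{t-1}_\mathfrak{a}(M/xM)}\mathfrak{a}^{n_0}$ by $R$-linearity, and this restriction both shows the relevant $\delta_{t-1}$ is onto and splits it, giving the second isomorphism of the theorem.

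I expect the main obstacle to be precisely the discovery of the auxiliary sequence $\xi$: the annihilation hypothesis $\mathfrak{a}^{n_0}H^i_\mathfrak{a}(M)=0$ has no grip on the connecting maps of $\eta$ by itself, and only becomes effective once $x$ has been presented through $\mathfrak{a}^{n_0}$ as $\sum a_kb_k$ and the two long exact sequences are compared; the rest — the vanishing of the induced maps below degree $t$ and the long exact sequence bookkeeping — is routine, though one should also dispatch the degenerate case $t=1$ (where $U=H^0_\mathfrak{a}(M)$, $M'$ has trivial $\mathfrak{a}$-torsion, and only the second isomorphism is asserted) by the same argument.
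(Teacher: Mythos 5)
Your proof is correct, but it takes a genuinely different and in fact more economical route than the paper. The paper encodes the sequence $0\to H^i_\mathfrak{a}(M)\to H^i_\mathfrak{a}(M/xM)\to H^{i+1}_\mathfrak{a}(\overline M)\to 0$ as a class $E^i_x\in\mathrm{Ext}^1_R(H^{i+1}_\mathfrak{a}(\overline M),H^i_\mathfrak{a}(M))$, proves the Baer-sum identities $E^i_{x+y}=E^i_x+E^i_y$ and $E^i_{xy}=yE^i_x$ (Theorem 2.2), and concludes $E^i_x=\sum a_jE^i_{b_j}=0$. Because the additivity statement requires each summand \emph{and each partial sum} to satisfy condition $(\sharp)$ (i.e.\ to be filter regular with $0:_M(-)=U$), the paper must first prove the prime-avoidance Lemma 3.1/Corollary 3.2 producing a decomposition $x=\sum a_jb_j$ with all partial sums filter regular, and must localize at maximal ideals (Lemma 3.3) to make prime avoidance available. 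Your construction sidesteps all of this: you take an \emph{arbitrary} expression $x=\sum_k a_kb_k$ with $a_k,b_k\in\mathfrak{a}^{n_0}$ (available since $\mathfrak{a}^{2n_0}=\mathfrak{a}^{n_0}\cdot\mathfrak{a}^{n_0}$), factor $\bar x=\alpha\circ\bar\beta$ through $M^m$, and observe that injectivity of $\bar\beta$ is inherited from $\bar x$ rather than imposed on the $b_k$ — so no filter-regularity of the pieces, no prime avoidance, and no localization is needed. The comparison of the two long exact sequences, with $H^i_\mathfrak{a}(\alpha)=H^i_\mathfrak{a}(\bar\beta)=0$ for $i<t$, then yields the section $\tilde g_i$ directly, and your treatment of the edge case $i=t-1$ (restricting $\tilde g_{t-1}$ to $0:_{H^t_\mathfrak{a}(M)}\mathfrak{a}^{n_0}\subseteq\ker H^t_\mathfrak{a}(\bar\beta)$) correctly recovers the second isomorphism. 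In effect you perform the paper's Baer sum ``all at once'' via the single map $M\to M^m$, $z\mapsto(b_1z,\dots,b_mz)$, which is why the paper's most delicate ingredient (Lemma 3.1) evaporates; what the paper's longer route buys in exchange is the explicit $\mathrm{Ext}^1$ calculus of Section 2, which it states as a result of independent interest.
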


It is well-known that every parameter element of $M$ is an $\frak
m$-filter regular element, if $M$ is a generalized Cohen-Macaulay
module. Therefore Theorem \ref{th} gives a complete affirmative
answer for the question above, where the integer $n$ is just
$n=\min \{ k\mid  \mathfrak{m}^{k}H^{i}_\mathfrak{m}(M) = 0,\
i=0,\ldots , d-1\}$. The key point for proving Theorem \ref{th} is
as follows. Let $x$ and $ t$ be as in Theorem \ref{th}. From the
short exact sequence $0 \longrightarrow M/H^{0}_\mathfrak{a}(M)
\overset{x}{\longrightarrow}M \longrightarrow M/xM \longrightarrow
0$ we obtain short exact sequences
$$0 \longrightarrow H^{i}_\mathfrak{a}(M) \longrightarrow H^{i}_\mathfrak{a}(M/xM) \longrightarrow
H^{i+1}_\mathfrak{a}(M) \longrightarrow 0, \quad (\ast)$$
$i=0,\ldots ,t-2$. So for each $i<t-1$ we can consider the short
exact sequence $(\ast)$ as an element of the group of extensions $
\mathrm{Ext}^1_R(H^{i+1}_\mathfrak{a}(M), H^{i}_\mathfrak{a}(M))$
(see, Chapter 3, \cite{M}). Then, the splitting of sequence
$(\ast)$ is equivalent to say that it is the zero-element of this
group. We will give some properties of the (Bear) sum and the
$R$-module structure of this group of extensions in the next
section. The proof of Theorem \ref{th} will be done in Section 3.
The last section is involved to find applications of Theorem
\ref{th}. Especially, we show that the main theorems of M.
Brodmann and A.L. Faghani \cite{BF}, and of H.L. Truong and the
first author \cite{CT} are immediate consequences of Theorem
\ref{th}.

\section{The extension module Ext$^1$}
  In this section, let $\mathfrak{a}$ be an ideal of
a Noetherian ring $R$, and $M$ a finitely generated $R$-module.
It is well-known that for a positive integer $t$, $H^{i}_\mathfrak{a}(M)$ is finitely
generated for all $i<t$ iff there exists a positive
integer $n_0$ such that $\mathfrak{a}^{n_0}
H^{i}_\mathfrak{a}(M) = 0$ for all $i<t$.  An element $x\in \frak a$ is called an $\frak a$-filter
regular element of $M$ if $x \notin \mathfrak{p}$ for all
$\mathfrak{p} \in \mathrm{Ass}M\setminus V(\mathfrak{a})$, where
$V(\mathfrak{a})$ is the set of all prime ideals of $R$ containing
$\mathfrak{a}$. It should be noted that  there  always  exist $\frak a$-filter
regular elements. Moreover,  if $x \in \mathfrak{a}^{n_0}$ is an
$\mathfrak{a}$-filter regular element of $M$, then the short exact
sequence
$$0 \longrightarrow M' \overset{x}{\longrightarrow} M \longrightarrow M/xM \longrightarrow 0,$$
where ${M'} = M/H^{0}_\mathfrak{a}(M)$, reduces short
exact sequences
$$0 \longrightarrow H^{i}_\mathfrak{a}(M) \longrightarrow H^{i}_\mathfrak{a}(M/xM)
\longrightarrow H^{i+1}_\mathfrak{a}({M'}) \longrightarrow 0,$$
for all $i<t-1$. This situation is a special case of the following
consideration: given an integer $t$, an ideal $\frak a$ of $R$ and
a submodule $U$ of $M$. Set $\overline{M} =M/U$. We say that an
element $x \in \mathfrak{a}$ satisfies the condition $(\sharp)$
if $0:_Mx = U$, and the short exact sequence
$$0 \longrightarrow \overline{M} \overset{x}{\longrightarrow} M \longrightarrow M/xM \longrightarrow 0$$
reduces  short exact sequences
$$0 \longrightarrow H^{i}_\mathfrak{a}(M) \longrightarrow H^{i}_\mathfrak{a}(M/xM)
\longrightarrow H^{i+1}_\mathfrak{a}(\overline{M}) \longrightarrow
0$$ for all $i<t-1$.

\begin{proposition}\label{PRO3.1}
Let $M, U, \overline{M}, \mathfrak{a}$ and $t$ be as above.
Suppose that $x, y $ are elements in $\frak a$  such that $x$ and
$xy$ satisfy the condition $(\sharp)$, and $yH^{i}_\mathfrak{a}(M)
= 0$ for all $i<t$. Then, for all $i < t-1$, we have
$$H^{i}_\mathfrak{a}(M/xyM)\cong H^{i}_\mathfrak{a}(M)\oplus H^{i+1}_\mathfrak{a}(\overline{M}).$$
Moreover, if $H^{t}_\mathfrak{a}(\overline{M}) \cong
H^{t}_\mathfrak{a}(M)$,  we  have
$$0:_{H^{t-1}_\mathfrak{a}(M/xyM)}x \cong  0:_{H^{t-1}_\mathfrak{a}(M)}x \oplus 0:_{H^{t}_\mathfrak{a}(\overline{M})}x .$$
\end{proposition}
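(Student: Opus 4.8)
The plan is to exploit the fact that the element $x$ acts as zero on every local cohomology module in the range of interest, so that the short exact sequences $(\ast)$ produced by the condition $(\sharp)$ live in $\operatorname{Ext}^1$-groups that are annihilated by $x$, and then to show that multiplying $x$ by a suitable $y$ kills the corresponding extension class. First I would record the two short exact sequences furnished by $(\sharp)$: from $x$ itself,
\[
0 \longrightarrow H^{i}_{\mathfrak a}(M) \longrightarrow H^{i}_{\mathfrak a}(M/xM) \longrightarrow H^{i+1}_{\mathfrak a}(\overline M) \longrightarrow 0,
\]
and from $xy$ the analogous sequence with $M/xyM$ in the middle, for all $i<t-1$. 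The hypothesis $yH^{i}_{\mathfrak a}(M)=0$ together with the long exact sequence for $0\to\overline M\xrightarrow{x}M\to M/xM\to 0$ forces $xH^{i+1}_{\mathfrak a}(\overline M)=0$ as well (the connecting and inclusion maps identify the relevant pieces), so in fact $x$ annihilates both the sub and the quotient in each $(\ast)$.

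The heart of the argument is then the following observation about extension classes, which I expect to be the main technical point. The sequence involving $M/xyM$ sits naturally over the one involving $M/xM$: there is a commutative diagram with vertical map $M/xyM \to M/xM$ (reduction, or rather the map induced by multiplication by $y$ going the other way), and chasing it shows that the class $\xi_{xy}\in\operatorname{Ext}^1_R(H^{i+1}_{\mathfrak a}(\overline M),H^{i}_{\mathfrak a}(M))$ of the $xy$-sequence is obtained from the class $\xi_x$ of the $x$-sequence by pullback along multiplication by $y$ on $H^{i+1}_{\mathfrak a}(\overline M)$, hence $\xi_{xy}=y\cdot\xi_x$ in the $R$-module structure on $\operatorname{Ext}^1$ discussed in Section 2. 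But $yH^{i+1}_{\mathfrak a}(\overline M)=0$ — which follows again from $yH^{i}_{\mathfrak a}(M)=0$ and the long exact sequence, giving $y$ times the middle term zero, hence $y$ times the quotient zero for $i+1\le t-1$, and for the top index one uses that $y$ kills $H^{i}_{\mathfrak a}(M)$ and the sequence — so $y\cdot\operatorname{Ext}^1_R(H^{i+1}_{\mathfrak a}(\overline M),-)=0$ by functoriality in the $R$-module structure. Therefore $\xi_{xy}=0$, i.e. the $xy$-sequence splits, which is exactly $H^{i}_{\mathfrak a}(M/xyM)\cong H^{i}_{\mathfrak a}(M)\oplus H^{i+1}_{\mathfrak a}(\overline M)$ for $i<t-1$.

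For the last assertion, assuming $H^{t}_{\mathfrak a}(\overline M)\cong H^{t}_{\mathfrak a}(M)$, I would descend to the borderline index $i=t-1$. The sequence $(\sharp)$ need not split there, but applying the left-exact functor $0:_{(-)}x = \operatorname{Hom}_R(R/xR,-)$ to the (possibly non-split) short exact sequence with $M/xyM$ in the middle, and using that $x$ annihilates $H^{t-1}_{\mathfrak a}(M)$ so that $0:_{H^{t-1}_{\mathfrak a}(M)}x=H^{t-1}_{\mathfrak a}(M)$, yields a left-exact sequence
\[
0 \longrightarrow H^{t-1}_{\mathfrak a}(M) \longrightarrow 0:_{H^{t-1}_{\mathfrak a}(M/xyM)}x \longrightarrow 0:_{H^{t}_{\mathfrak a}(\overline M)}x \longrightarrow \operatorname{Ext}^1_R(R/xR,H^{t-1}_{\mathfrak a}(M)).
\]
I would then argue the connecting map into $\operatorname{Ext}^1_R(R/xR,H^{t-1}_{\mathfrak a}(M))$ is zero: the extension class of the $xy$-sequence at level $t-1$, pushed into this $\operatorname{Ext}^1$, again carries a factor of $y$ by the same pullback-over-$M/xM$ argument, and the hypothesis $H^{t}_{\mathfrak a}(\overline M)\cong H^{t}_{\mathfrak a}(M)$ is what guarantees the relevant piece of $H^{t}_{\mathfrak a}(\overline M)$ is itself annihilated by a power of $\mathfrak a$ (equivalently the sequence $(\ast)$ extends one step further), so that $y$ kills the class. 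Hence the displayed four-term sequence is short exact and even splits because $x$ acts as zero on the sub — giving $0:_{H^{t-1}_{\mathfrak a}(M/xyM)}x\cong 0:_{H^{t-1}_{\mathfrak a}(M)}x\oplus 0:_{H^{t}_{\mathfrak a}(\overline M)}x$. The main obstacle I anticipate is making the identification $\xi_{xy}=y\cdot\xi_x$ precise at the level of Yoneda/Baer representatives and checking that the $R$-module structure on $\operatorname{Ext}^1$ used there is the one set up in Section 2; once that functoriality is in hand, everything else is a diagram chase together with the annihilator bookkeeping.
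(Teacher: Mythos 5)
For the first isomorphism your route differs from the paper's: you work with the Baer class $\xi_x\in\mathrm{Ext}^1_R(H^{i+1}_\mathfrak{a}(\overline M),H^{i}_\mathfrak{a}(M))$, prove $\xi_{xy}=y\,\xi_x$ by a map of extensions, and then kill the class; the paper instead splits the $xy$-sequence directly, by observing that $H^i_\mathfrak{a}(f)\circ H^i_\mathfrak{a}(p_1)=H^i_\mathfrak{a}(p_2)\circ y=0$ and factoring $H^i_\mathfrak{a}(f)$ through $\mathrm{coker}\,H^i_\mathfrak{a}(p_1)\cong H^{i+1}_\mathfrak{a}(\overline M)$ to get an explicit section $\epsilon^i$ (your identity $\xi_{xy}=y\xi_x$ is exactly the paper's later Theorem 3.2(ii), so your order of ideas is reversed but legitimate). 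However, your justification that $y\xi_x=0$ is wrong as stated: you claim $yH^{i+1}_\mathfrak{a}(\overline M)=0$ because $y$ kills the sub of $(\ast)$, ``hence the middle, hence the quotient'' --- an element annihilating a submodule of an extension annihilates neither the middle term nor the quotient, and in the general $(\sharp)$-setting (arbitrary $U$) there is no reason for $y$ to kill $H^{i+1}_\mathfrak{a}(\overline M)$. The fix is simpler than what you attempt: $\mathrm{Ext}^1_R(C,A)$ is annihilated by $\mathrm{Ann}_R(A)$, and $y$ annihilates $A=H^i_\mathfrak{a}(M)$ for $i<t$ by hypothesis, so $y\xi_x=0$ without any statement about $\overline M$. (A similar misattribution occurs at the start: $x$ does kill $H^{j}_\mathfrak{a}(M)$ and $H^{j}_\mathfrak{a}(\overline M)$ for $j\le t-1$, but this comes from condition $(\sharp)$ --- the maps $H^j_\mathfrak{a}(\overline M)\to H^j_\mathfrak{a}(M)$ induced by $x$ vanish and module multiplication by $x$ factors through them --- not from $yH^i_\mathfrak{a}(M)=0$.)

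The genuine gap is in the boundary case $i=t-1$. After applying $\mathrm{Hom}_R(R/xR,-)$ you must show two things: that the connecting map into $\mathrm{Ext}^1_R(R/xR,H^{t-1}_\mathfrak{a}(M))$ vanishes, and that the resulting short exact sequence splits. Your argument for the first (``the class carries a factor of $y$'') is not made precise --- vanishing of that connecting map concerns the pullbacks of the $xy$-sequence along all maps $R/xR\to 0:_{H^{t}_\mathfrak{a}(\overline M)}xy$, not a single extension class --- and your argument for the second (``splits because $x$ acts as zero on the sub'') is simply not a valid reason: a short exact sequence of modules killed by $x$ need not split. What actually closes the argument is an explicit compatible section, which is how the paper proceeds: using $yH^{t-1}_\mathfrak{a}(M)=0$ one factors $H^{t-1}_\mathfrak{a}(f)$ as before to obtain $\epsilon^{t-1}:0:_{H^{t}_\mathfrak{a}(\overline M)}x\to H^{t-1}_\mathfrak{a}(M/xyM)$ with $\delta_2^{t-1}\circ\epsilon^{t-1}=\alpha$; its image lies in the image of $H^{t-1}_\mathfrak{a}(M/xM)$, hence is $x$-torsion, so after applying $\mathrm{Hom}_R(R/(x),-)$ it gives a section of $0:_{H^{t-1}_\mathfrak{a}(M/xyM)}x\to 0:_{H^{t}_\mathfrak{a}(\overline M)}x$ and the claimed decomposition. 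Equivalently, in your language: $\alpha^*\xi^{t-1}_{xy}=y_*\xi^{t-1}_{x}=0$ since $y$ kills $H^{t-1}_\mathfrak{a}(M)$, so the pullback of the $xy$-sequence along $\alpha$ splits, and one must then check the splitting map lands in the $x$-socle. As written, your sketch does not supply this section, so the second assertion is not proved.
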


\begin{proof}
Since $U = 0:_Mx = 0:_Mxy$, we have the following commutative
diagram
\[\divide\dgARROWLENGTH by 2
\begin{diagram}
\node{0}\arrow{e}\node{\overline{M}}
\arrow{e,t}{x}\arrow{s,l}{id}\node{M}\arrow{e,t}{p_1}\arrow{s,l}{y}
\node{M/xM}\arrow{s,l}{f}\arrow{e} \node{0}\\
\node{0}\arrow{e}\node{\overline{M}}
\arrow{e,t}{xy}\node{M}\arrow{e,t}{p_2}\node{M/xyM}\arrow{e}
\node{0}
\end{diagram}
\]
with  exact rows, $p_1, p_2$ are natural projections, and $f$ is the
induced homomorphism.  We get by applying the functor
$H^{i}_\mathfrak{a}(\bullet)$ to the above diagram for all
$i < t-1$ the following commutative diagram
\[\divide\dgARROWLENGTH by 2
\begin{diagram}
\node{0}\arrow{e}\node{H^{i}_\mathfrak{a}(M)}
\arrow{e,t}{H^{i}_\mathfrak{a}(p_1)}\arrow{s,l}{y}\node{H^{i}_\mathfrak{a}(M/xM)}\arrow{e,t}{\delta_1^i}\arrow{s,l}
{H^{i}_\mathfrak{a}(f)}
\node{H^{i+1}_\mathfrak{a}(\overline{M})}\arrow{s,l}{id}\arrow{e} \node{0}\\
\node{0}\arrow{e}\node{H^{i}_\mathfrak{a}(M)}
\arrow{e,t}{H^{i}_\mathfrak{a}(p_2)}\node{H^{i}_\mathfrak{a}(M/xyM)}\arrow{e,t}{\delta_2^i}
\node{H^{i+1}_\mathfrak{a}(\overline{M})}\arrow{e} \node{0,}
\end{diagram}
\]
 where $\delta_1^i, \delta_2^i$ are connected homomorphisms. Moreover,
 since $yH^{i}_\mathfrak{a}(M) = 0$ for all $i<t-1$, $H^{i}_\mathfrak{a}(f)\circ H^{i}_\mathfrak{a}(p_1) =
 0$. Therefore there exists a homomorphism
 $$\epsilon^i:
 H^{i+1}_\mathfrak{a}(\overline{M}) \cong \mathrm{coker}H^{i}_\mathfrak{a}(p_1)\rightarrow
 H^{i}_\mathfrak{a}(M/xyM)$$
for all $i<t-1$, which makes the following diagram
 \[\divide\dgARROWLENGTH by 2
\begin{diagram}
\node{0}\arrow{e}\node{H^{i}_\mathfrak{a}(M)}
\arrow{e,t}{H^{i}_\mathfrak{a}(p_1)}\arrow{s,l}{y}\node{H^{i}_\mathfrak{a}(M/xM)}\arrow{e,t}{\delta_1^i}\arrow{s,l}
{H^{i}_\mathfrak{a}(f)}
\node{H^{i+1}_\mathfrak{a}(\overline{M})}\arrow{s,l}{id}\arrow{sw,t}{\epsilon^i}\arrow{e} \node{0}\\
\node{0}\arrow{e}\node{H^{i}_\mathfrak{a}(M)}
\arrow{e,t}{H^{i}_\mathfrak{a}(p_2)}\node{H^{i}_\mathfrak{a}(M/xyM)}\arrow{e,t}{\delta_2^i}
\node{H^{i+1}_\mathfrak{a}(\overline{M})}\arrow{e} \node{0}
\end{diagram}
\]
commutative for all $i<t-1$. Hence $\delta_2^i \circ \epsilon^i =
id$,  and so  we get
$$H^{i}_\mathfrak{a}(M/xyM)\cong H^{i}_\mathfrak{a}(M)\oplus H^{i+1}_\mathfrak{a}(\overline{M}),$$
for all $i < t-1$.\\
In the case $H^{t}_\mathfrak{a}(\overline{M}) \cong
H^{t}_\mathfrak{a}(M)$ and $i=t-1$, we have the following commutative diagram
\[\divide\dgARROWLENGTH by 2
\begin{diagram}
\node{0}\arrow{e}\node{H^{t-1}_\mathfrak{a}(M)}
\arrow{e,t}{H^{t-1}_\mathfrak{a}(p_1)}\arrow{s,l}{y}\node{H^{t-1}_\mathfrak{a}(M/xM)}\arrow{e,t}{\delta_1^{t-1}}\arrow{s,l}
{H^{t-1}_\mathfrak{a}(f)}
\node{0:_{H^{t}_\mathfrak{a}(\overline{M})}x}\arrow{s,l}{\alpha}\arrow{e} \node{0}\\
\node{0}\arrow{e}\node{H^{t-1}_\mathfrak{a}(M)}
\arrow{e,t}{H^{t-1}_\mathfrak{a}(p_2)}\node{H^{t-1}_\mathfrak{a}(M/xyM)}\arrow{e,t}{\delta_2^{d-1}}
\node{0:_{H^{t}_\mathfrak{a}(\overline{M})}xy}\arrow{e} \node{0,}
\end{diagram}
\]
where $\alpha$: $0:_{H^{t}_\mathfrak{a}(\overline{M})}x
\rightarrow 0:_{H^{t}_\mathfrak{a}(\overline{M})}xy$ is injective.
With similar method as used in the cases $i<t-1$, there exists a homomorphism
$\epsilon^{t-1}$:
$0:_{H^{t}_\mathfrak{a}(\overline{M})}x\rightarrow
H^{t-1}_\mathfrak{a}(M/xyM)$ such that $\delta_2^{t-1} \circ
\epsilon^{t-1} = \alpha$.
By applying  the functor
$\mathrm{Hom}_R(R/(x),\bullet)$ to the above diagram we can check that
$$0:_{H^{t-1}_\mathfrak{a}(M/xyM)}x \cong  0:_{H^{t-1}_\mathfrak{a}(M)}x \oplus 0:_{H^{t}_\mathfrak{a}(\overline{M})}x.$$
\end{proof}

If $x \in \mathfrak{a}$ satisfies the condition $(\sharp)$, for each
$i<t-1$ we can consider
$$0 \longrightarrow H^{i}_\mathfrak{a}(M) \longrightarrow H^{i}_\mathfrak{a}(M/xM)
 \longrightarrow H^{i+1}_\mathfrak{a}(\overline{M}) \longrightarrow 0$$
 as an extension of $H^{i}_\mathfrak{a}(M)$ by $H^{i+1}_\mathfrak{a}(\overline{M})$, therefore as an
 element of $\mathrm{Ext}^1_R(H^{i+1}_\mathfrak{a}(\overline{M}),
H^{i}_\mathfrak{a}(M))$ (see, Chapter 3, \cite{M}). We denote this element
by $E_x^i$. Especially, if $H^{t}_\mathfrak{a}(\overline{M}) \cong
H^{t}_\mathfrak{a}(M)$, we have the  short
exact sequence
$$0 \longrightarrow H^{t-1}_\mathfrak{a}(M) \longrightarrow H^{t-1}_\mathfrak{a}(M/xM)
 \longrightarrow 0:_{H^{t}_\mathfrak{a}(\overline{M})}x \longrightarrow 0.$$
Let $n_0$ be a positive integer such that $x \in \frak{a}^{n_0}$.
 Suppose that the short exact sequence above derives  the following short exact sequence
$$0 \longrightarrow 0:_{H^{t-1}_\mathfrak{a}(M)}\mathfrak{a}^{n_0} \longrightarrow
0:_{H^{t-1}_\mathfrak{a}(M/xM)}\mathfrak{a}^{n_0}
 \longrightarrow 0:_{H^{t}_\mathfrak{a}(\overline{M})}\mathfrak{a}^{n_0} \longrightarrow 0.$$
 Then we can consider this exact sequence as an element of  $\mathrm{Ext}^1_R(0:_{H^{t}_\mathfrak{a}(\overline{M})}\mathfrak{a}^{n_0},
0:_{H^{t-1}_\mathfrak{a}(M)}\mathfrak{a}^{n_0})$, and denote it by $F^{t-1}_{n_0,x}$.
 It should be noted here that an extension of $R$-module $A$ by $R$-module $C$ is split if it is the zero-element of
 $\mathrm{Ext}^1_R(C, A)$. The following result is important for the proof of Theorem 1.1.

\begin{theorem}\label{T3.2}
Let  $M, U, \overline{M}, \mathfrak{a}$ and $t$ be as above and
$x, y  \in\frak a$. Then the following statements are true.

\noindent
 $\mathrm{(i)}$ Suppose that $x, y,x+y$ satisfy the condition $(\sharp)$, then $E_{x+y}^i = E_x^i + E_y^i$ for all
$i<t-1$. Furthermore, if $H^{t}_\mathfrak{a}(\overline{M}) \cong
H^{t}_\mathfrak{a}(M)$ and $F^{t-1}_{n_0,x}, F^{t-1}_{n_0,y}$ are
determined, then $F^{t-1}_{n_0,x+y}$ is
also determined, and we have $F^{t-1}_{n_0,x+y} = F^{t-1}_{n_0,x} + F^{t-1}_{n_0,y}$.

\noindent
$\mathrm{(ii)}$ Suppose that  $x, xy$ satisfy the condition $(\sharp)$, then $E_{xy}^i =
yE_x^i$ for all $i<t-1$. Moreover, if
$H^{t}_\mathfrak{a}(\overline{M}) \cong H^{t}_\mathfrak{a}(M)$ and
$F^{t-1}_{n_0,x}$ is determined, then $F^{t-1}_{n_0,xy}$ is also
determined and $F^{t-1}_{n_0,xy}=yF^{t-1}_{n_0,x}$. Especially,  if
$yH^{i}_\mathfrak{a}(M)=0$, for all $i<t$, then $F^{t-1}_{n_0,xy}=E_{xy}^i =0$ for
all $i<t-1$.
\end{theorem}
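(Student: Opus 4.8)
The plan is to exhibit each of $E_x^i$, $E_y^i$, $E_{x+y}^i$ (resp.\ $E_x^i$ and $E_{xy}^i$) as a pushout of one fixed extension along explicit homomorphisms, and then to read off both assertions from the elementary functoriality of $\mathrm{Ext}^1$: the operation $g\mapsto g_\ast\xi$ is additive in $g$, and the $R$-module structure on $\mathrm{Ext}^1_R(C,A)$ is given by pushing out along multiplication on $A$ (equivalently, pulling back along multiplication on $C$), as recorded in Section 2. I would treat the statements about the $E$'s first; those about the $F$'s come from the same arguments after applying the left exact functor $0:_{(-)}\mathfrak{a}^{n_0}=\mathrm{Hom}_R(R/\mathfrak{a}^{n_0},-)$ throughout.

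For (i), since $x,y,x+y$ satisfy $(\sharp)$ we have $0:_Mx=0:_My=0:_M(x+y)=U$, so multiplication induces injections $\overline x,\overline y,\overline{x+y}\colon\overline M\hookrightarrow M$ with $\overline{x+y}=\overline x+\overline y$, and short exact sequences $\sigma_x\colon 0\to\overline M\xrightarrow{\overline x}M\to M/xM\to 0$, and likewise $\sigma_y$ and $\sigma_{x+y}$. I would set $\varphi=\binom{\overline x}{\overline y}\colon\overline M\hookrightarrow M\oplus M$, let $N=\mathrm{coker}\,\varphi$, and form $\tau\colon 0\to\overline M\xrightarrow{\varphi}M\oplus M\to N\to 0$. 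Since $\mathrm{pr}_1\circ\varphi=\overline x$, $\mathrm{pr}_2\circ\varphi=\overline y$ and $(\mathrm{pr}_1+\mathrm{pr}_2)\circ\varphi=\overline{x+y}$, the two projections and their sum yield morphisms of short exact sequences $\tau\to\sigma_x$, $\tau\to\sigma_y$, $\tau\to\sigma_{x+y}$, each acting as the identity on the common left-hand term $\overline M$.

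Applying $H^\bullet_\mathfrak{a}(-)$ turns $\sigma_x,\sigma_y,\sigma_{x+y}$ into $E_x^i,E_y^i,E_{x+y}^i$ for $i<t-1$. One checks that $(\sharp)$ for $x$ and for $y$ forces $\overline x$ and $\overline y$, hence $\varphi$, to act as zero on $H^j_\mathfrak{a}(\overline M)$ for all $j\le t-1$, so the long exact sequence of $\tau$ breaks, for each $i<t-1$, into a short exact sequence $G^i\colon 0\to H^i_\mathfrak{a}(M)^2\to H^i_\mathfrak{a}(N)\to H^{i+1}_\mathfrak{a}(\overline M)\to 0$, an element of $\mathrm{Ext}^1_R(H^{i+1}_\mathfrak{a}(\overline M),H^i_\mathfrak{a}(M)^2)$. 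By naturality of the connecting homomorphism (the left verticals being identities), the three module morphisms induce morphisms of extensions $G^i\to E_x^i$, $G^i\to E_y^i$, $G^i\to E_{x+y}^i$ which are the identity on the quotient $H^{i+1}_\mathfrak{a}(\overline M)$ and equal $\mathrm{pr}_1,\mathrm{pr}_2,\mathrm{pr}_1+\mathrm{pr}_2\colon H^i_\mathfrak{a}(M)^2\to H^i_\mathfrak{a}(M)$ on the submodule. By the standard description of the pushout of an extension (Chapter 3 of \cite{M}) this gives $E_x^i=(\mathrm{pr}_1)_\ast G^i$, $E_y^i=(\mathrm{pr}_2)_\ast G^i$ and $E_{x+y}^i=(\mathrm{pr}_1+\mathrm{pr}_2)_\ast G^i=(\mathrm{pr}_1)_\ast G^i+(\mathrm{pr}_2)_\ast G^i=E_x^i+E_y^i$, which is the first assertion of (i).

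For (ii), I would reuse the commutative diagram displayed in the proof of Proposition \ref{PRO3.1}: its two rows are the module sequences underlying $E_x^i$ and $E_{xy}^i$, its left vertical map is $\mathrm{id}_{\overline M}$ and its middle vertical map is multiplication by $y$. Applying $H^\bullet_\mathfrak{a}(-)$ and using $(\sharp)$ for $x$ and for $xy$ gives, for each $i<t-1$, a morphism of extensions $E_x^i\to E_{xy}^i$ that is the identity on $H^{i+1}_\mathfrak{a}(\overline M)$ and multiplication by $y$ on $H^i_\mathfrak{a}(M)$, whence $E_{xy}^i=(\cdot y)_\ast E_x^i=yE_x^i$. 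If in addition $yH^i_\mathfrak{a}(M)=0$ for all $i<t$, multiplication by $y$ vanishes on $H^i_\mathfrak{a}(M)$ and on its submodule $0:_{H^{t-1}_\mathfrak{a}(M)}\mathfrak{a}^{n_0}$, so $E_{xy}^i=0$ for $i<t-1$ and $F^{t-1}_{n_0,xy}=0$. The $F$-statements in both (i) and (ii) follow by running the above after applying $\mathrm{Hom}_R(R/\mathfrak{a}^{n_0},-)$ to every sequence and morphism; the one extra observation needed is that, as $x,y,x+y\in\mathfrak{a}^{n_0}$, the submodule $0:_{H^t_\mathfrak{a}(\overline M)}\mathfrak{a}^{n_0}$ lies in each of $0:_{H^t_\mathfrak{a}(\overline M)}x$, $0:_{H^t_\mathfrak{a}(\overline M)}y$ and $0:_{H^t_\mathfrak{a}(\overline M)}(x+y)$, which is exactly what makes the corresponding sequences of $\mathfrak{a}^{n_0}$-torsion submodules simultaneously short exact (so that ``$F^{t-1}_{n_0,x+y}$'', resp.\ ``$F^{t-1}_{n_0,xy}$'', is determined) with common quotient $0:_{H^t_\mathfrak{a}(\overline M)}\mathfrak{a}^{n_0}$. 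The only genuinely delicate point, I expect, is the diagram chase that pins down the induced morphisms of extensions together with the verification that $(\sharp)$ (for both elements involved) yields the claimed short exactness of the $G^i$; once those are settled, additivity in (i) and the identity $E_{xy}^i=yE_x^i$ in (ii) are forced by the bilinearity of $\mathrm{Ext}^1$.
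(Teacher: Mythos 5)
Your handling of the $E$-statements is correct and is in substance the paper's own argument: you construct the same auxiliary extension $0\to\overline{M}\to M\oplus M\to N\to 0$ via $m\mapsto(xm,ym)$, observe that condition $(\sharp)$ forces the induced maps $H^{j}_{\mathfrak{a}}(\overline{M})\to H^{j}_{\mathfrak{a}}(M)$ to vanish for $j<t$, and recover $E^i_x$, $E^i_y$, $E^i_{x+y}$ from the single class attached to $N$; whether one writes $E^i_{x+y}=\nabla(E^i_x\oplus E^i_y)\Delta$ (the paper's Baer-sum formula) or invokes additivity of pushout along $\mathrm{pr}_1+\mathrm{pr}_2$ (your version) is a cosmetic difference. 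Your derivation of $E^i_{xy}=yE^i_x$ from the diagram of Proposition \ref{PRO3.1} is likewise exactly what the paper intends for (ii).

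The genuine gap is in the $F$-statements. You assert that the sequences of $\mathfrak{a}^{n_0}$-torsion submodules are ``simultaneously short exact'' because $0:_{H^{t}_{\mathfrak{a}}(\overline{M})}\mathfrak{a}^{n_0}$ is contained in each of $0:_{H^{t}_{\mathfrak{a}}(\overline{M})}x$, $0:_{H^{t}_{\mathfrak{a}}(\overline{M})}y$, $0:_{H^{t}_{\mathfrak{a}}(\overline{M})}(x+y)$ (resp.\ $0:_{H^{t}_{\mathfrak{a}}(\overline{M})}xy$). That containment only identifies the candidate quotient; it does not give surjectivity of $0:_{H^{t-1}_{\mathfrak{a}}(M/(x+y)M)}\mathfrak{a}^{n_0}\to 0:_{H^{t}_{\mathfrak{a}}(\overline{M})}\mathfrak{a}^{n_0}$, and that surjectivity is precisely what ``$F^{t-1}_{n_0,x+y}$ is determined'' means. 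Since $\mathrm{Hom}_R(R/\mathfrak{a}^{n_0},-)$ is only left exact, exactness can fail -- this is exactly why the theorem must \emph{assume} that $F^{t-1}_{n_0,x}$ and $F^{t-1}_{n_0,y}$ are determined rather than getting it for free, and why, at level $t-1$, the right-hand terms of your extensions are the various $0:_{H^{t}_{\mathfrak{a}}(\overline{M})}(-)$ with only injective (not identity) comparison maps, so your pushout reading does not apply directly. What is missing is the deduction of the surjectivity for $x+y$ (resp.\ $xy$) from the hypothesis. In the paper this takes two steps: first, applying $\mathrm{Ext}^{\bullet}_R(R/\mathfrak{a}^{n_0},-)$ to the morphism from the $N$-row to $E^{t-1}_x\oplus E^{t-1}_y$ and using naturality of the connecting homomorphism, the hypothesis forces the connecting map of the $N$-row to vanish, so that $0\to 0:_{H^{t-1}_{\mathfrak{a}}(M)^2}\mathfrak{a}^{n_0}\to 0:_{H^{t-1}_{\mathfrak{a}}(N)}\mathfrak{a}^{n_0}\to 0:_{H^{t}_{\mathfrak{a}}(\overline{M})}\mathfrak{a}^{n_0}\to 0$ is exact; second, a chase through the codiagonal morphism to the $(x+y)$-row (lift a socle element into $0:_{H^{t-1}_{\mathfrak{a}}(N)}\mathfrak{a}^{n_0}$ and push it down) gives that $F^{t-1}_{n_0,x+y}$ is determined and identifies it with the image of the $N$-socle class, from which the sum formula follows. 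An analogous (easier) chase, using the determinedness of $F^{t-1}_{n_0,x}$ and the map $H^{t-1}_{\mathfrak{a}}(f)$ as in Proposition \ref{PRO3.1}, is needed in (ii) before you may write $F^{t-1}_{n_0,xy}=yF^{t-1}_{n_0,x}$. Without these steps the ``Furthermore''/``Moreover'' parts of (i) and (ii) remain unproved.
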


\begin{proof} (i) We consider the homomorphism $\varphi: M \to M \oplus M,\, \varphi(m) = (xm,
ym)$. Because $U = 0:_Mx=0:_My$ so we have short exact sequence
$$0 \longrightarrow
\overline{M} \overset{\overline{\varphi}}{\longrightarrow} M
\oplus M \longrightarrow N \longrightarrow 0,$$ where $N =
\mathrm{coker}(\overline{\varphi})$. The following diagram is
commutative
\[\divide\dgARROWLENGTH by 2
\begin{diagram}
\node{0}\arrow{e}\node{\overline{M}}
\arrow{e,t}{\overline{\varphi}}\arrow{s,l}{\Delta_{\overline{M}}}\node{M
\oplus M}\arrow{e}\arrow{s,l}{id}
\node{N}\arrow{e}\arrow{s} \node{0}\\
\node{0}\arrow{e}\node{\overline{M} \oplus \overline{M}}
\arrow{e,t}{x\oplus y}\node{M \oplus M}\arrow{e} \node{M/xM \oplus
M/yM}\arrow{e}\node{0,}
\end{diagram}
\]
where $\Delta_{\overline{M}}: \overline{M} \to \overline{M} \oplus
\overline{M}$, $\Delta(m) = (m,m)$ is  a diagonal homomorphism. Note  that the derived homomorphism of
$\Delta_{\overline{M}}$ is also a diagonal homomorphism, the homomorphism
$\Delta_{H^{i}_\mathfrak{a}(\overline{M})}:
H^{i}_\mathfrak{a}(\overline{M}) \to
H^{i}_\mathfrak{a}(\overline{M}) \oplus
H^{i}_\mathfrak{a}(\overline{M})$ is diagonal for all $i\geq 0$. Therefore,
we get by applying the functor $H^{i}_\mathfrak{a}(\bullet)$ to the above
diagram  the following commutative diagram
\[
\begin{diagram}
\node{\cdots}\arrow{e}\node{H^{i}_\mathfrak{a}(\overline{M})}
\arrow{e,t}{\varphi^i}\arrow{s,l}{\Delta_{H^{i}_\mathfrak{a}(\overline{M})}}\node{H^{i}_\mathfrak{a}(M)^2}\arrow{e}
\arrow{s,l}{id}\node{\cdots}\\
\node{\cdots}\arrow{e}\node{H^{i}_\mathfrak{a}(\overline{M})^2}
\arrow{e,t}{x\oplus y}\node{H^{i}_\mathfrak{a}(M)^2}\arrow{e}
\node{\cdots,}
\end{diagram}
\]
where  $A^2= A \oplus A$ for an $R$-module $A$, and $\varphi^i$ is
derived from $\overline{\varphi}$. Since $x, y$ satisfy the
condition $(\sharp)$, the homomorphism in the bottom row is zero,
for all $i<t$, hence $\varphi^i = 0$ for all $i<t$. Therefore, for
all $i<t-1$, the following diagram is commutative
\[
 \begin{CD}
{0 \longrightarrow H^{i}_\mathfrak{a}(M)^2} @>>>
{H^{i}_\mathfrak{a}(N)} @>>>
{H^{i+1}_\mathfrak{a}(\overline{M})} @>>> 0\\
   id @VVV  @VVV     @V\Delta_{H^{i+1}_\mathfrak{a}}(\overline{M})VV\\
{0 \longrightarrow H^{i}_\mathfrak{a}(M)^2} @>>>
{H^{i}_\mathfrak{a}(M/xM) \oplus H^{i}_\mathfrak{a}(M/yM)} @>>>
{H^{i+1}_\mathfrak{a}(\overline{M})^2} @>>> 0.
 \end{CD}
 \]
For all $i<t-1$, the exact sequence in the bottom row is just $E^i_x \oplus
E^i_y$. We denote the exact sequence in the top row by $E^i$, so
$$E^i =(E_x^i \oplus
E^i_y){\Delta_{H^{i+1}_\mathfrak{m}(\overline{M})}}\quad (1)$$
 for all $i<t-1$.\\
Moreover, if $H^{t}_\mathfrak{a}(M) \cong
H^{t}_\mathfrak{a}(\overline{M})$, we have the following commutative diagram
\[
 \begin{CD}
{0 \longrightarrow H^{t-1}_\mathfrak{a}(M)^2} @>>>
{H^{t-1}_\mathfrak{a}(N)} @>>>
{K_{(x,y)}} @>>> 0\\
   id @VVV  @VVV     @V\overline{\Delta}VV\\
{0 \longrightarrow H^{t-1}_\mathfrak{a}(M)^2} @>>>
{H^{t-1}_\mathfrak{a}(M/xM) \oplus H^{t-1}_\mathfrak{a}(M/yM)}
@>>> {K_x \oplus K_y} @>>> 0,
 \end{CD}
 \]
 where  $K_{(x,y)} =
0:_{H^{t}_\mathfrak{a}(\overline{M})}(x,y), K_{x} =
0:_{H^{t}_\mathfrak{a}(\overline{M})}x, K_{y} =
0:_{H^{t}_\mathfrak{a}(\overline{M})}y$, and
$\overline{\Delta}: K_{(x,y)} \longrightarrow K_{x} \oplus
K_{y}$ defined by $\quad \overline{\Delta}(c) = (c, c).$
 Since
$$\mathrm{Hom}_R(R/\mathfrak{a}^{n_0}, K_x)\cong
\mathrm{Hom}_R(R/\mathfrak{a}^{n_0}, K_y) \cong
\mathrm{Hom}_R(R/\mathfrak{a}^{n_0}, K_{(x,y)}) \cong
0:_{H^{t}_\mathfrak{a}(\overline{M})}\mathfrak{a}^{n_0},$$
by applying the functor $\mathrm{Ext}_R^i(R/\mathfrak{a}^{n_0}, \bullet)$ to
 the above diagram we
obtain the following commutative diagram
\[
\begin{CD}
{0:_{H^{t}_\mathfrak{a}(\overline{M})}\mathfrak{a}^{n_0}}
@>\delta_1>>
\mathrm{Ext}_R^1(R/\mathfrak{a}^{n_0},H^{t-1}_\mathfrak{a}(M)^2)\\
      \Delta @VVV @VidVV \\
{ (0:_{H^{t}_\mathfrak{a}(\overline{M})}\mathfrak{a}^{n_0})^2}
@>\delta_2>>
\mathrm{Ext}_R^1(R/\mathfrak{a}^{n_0},H^{t-1}_\mathfrak{a}(M)^2),
 \end{CD}
 \]
where $\delta_1, \delta_2$ are connected homomorphisms. Because
$F^{t-1}_{n_0,x}, F^{t-1}_{n_0, y}$ are determined, $\delta_2 =0$, so $\delta_1=0$.  Hence we obtain the following commutative diagram with exact rows
\[
 \begin{CD}
{0 \longrightarrow
0:_{H^{t-1}_\mathfrak{a}(M)^2}\mathfrak{a}^{n_0}} @>>>
{0:_{H^{t-1}_\mathfrak{a}(N)}\mathfrak{a}^{n_0}} @>>>
{0:_{H^{t}_\mathfrak{a}(\overline{M})}\mathfrak{a}^{n_0}} \longrightarrow 0\\
   id @VVV  @VVV      @V{\Delta}VV \\
{0 \longrightarrow
0:_{H^{t-1}_\mathfrak{a}(M)^2}\mathfrak{a}^{n_0}} @>>>
{0:_{H^{t-1}_\mathfrak{a}(M/xM) \oplus
H^{t-1}_\mathfrak{a}(M/yM)}\mathfrak{a}^{n_0}} @>>>
{0:_{H^{t}_\mathfrak{a}(\overline{M})^2}\mathfrak{a}^{n_0}}
\longrightarrow 0.
 \end{CD}
 \]
 The sequence in the bottom row is  just $F^{t-1}_{n_0, x} \oplus F^{t-1}_{n_0,y}$. We denote
the sequence in the top row by $F^{t-1}_{n_0}$, so
 $$F^{t-1}_{n_0} = (F^{t-1}_{n_0,x}\oplus
 F^{t-1}_{n_0, y}) \Delta_{0:_{H^{t}_\mathfrak{t}(\overline{M})}\mathfrak{a}^{n_0}}. \quad \quad(2)$$
On the other hand, we consider the following commutative diagram
\[\divide\dgARROWLENGTH by 2
\begin{diagram}
\node{0}\arrow{e}\node{\overline{M}}
\arrow{e,t}{\overline{\varphi}}\arrow{s,l}{id}\node{M \oplus
M}\arrow{e}\arrow{s,l}{\nabla_M}
\node{N}\arrow{e}\arrow{s} \node{0}\\
\node{0}\arrow{e}\node{\overline{M}} \arrow{e,t}{x+y}\node{M
}\arrow{e} \node{{M}/{(x+y)M}}\arrow{e}\node{0,}
\end{diagram}
\]
where $\nabla_{M}: M \oplus M \to M$, $\nabla(m, m') = m + m'$ is the
codiagonal homomorphism. Since
 derived homomorphisms of $\nabla_M$  are also  codiagonal
homomorphisms, so is the homomorphism $\nabla_{H^{i}_\mathfrak{a}(M)}:
H^{i}_\mathfrak{a}(M) \oplus H^{i}_\mathfrak{a}(M) \to
H^{i}_\mathfrak{a}(M)$ for all $i\geq 0$. Hence by applying the functor
$H^{i}_\mathfrak{a}(\bullet)$ to the above diagram we get the
following commutative diagram
\[\divide\dgARROWLENGTH by 2
\begin{diagram}
\node{E^i:\,0}\arrow{e}\node{H^{i}_\mathfrak{a}(M)\oplus
H^{i}_\mathfrak{a}(M)}\arrow{e}\arrow{s,l}{\nabla_{H^{i}_\mathfrak{a}(M)}}\node{H^{i}_\mathfrak{a}(N)}\arrow{e}\arrow{s}
\node{H^{i+1}_\mathfrak{a}(\overline{M})}\arrow{s,r}{id}\arrow{e} \node{0}\\
\node{E^i_{x+y}:\,0}\arrow{e}\node{H^{i}_\mathfrak{a}(M)}
\arrow{e}\node{H^{i}_\mathfrak{a}\big({M}/{(x+y)M}\big)}\arrow{e}
\node{H^{i+1}_\mathfrak{m}(\overline{M})}\arrow{e}\node{0,}
\end{diagram}
\]
for all $i<t-1$.  It follows  for all $i<t-1$ that
$$E^i_{x+y} = \nabla_{H^{i}_\mathfrak{m}(M)}E^i .\quad \quad (3)$$
Moreover, if $H^{t}_\mathfrak{a}(M) \cong
H^{t}_\mathfrak{a}(\overline{M})$, we have
\[
 \begin{CD}
{0 \longrightarrow H^{t-1}_\mathfrak{a}(M)^2} @>>>
{H^{t-1}_\mathfrak{a}(N)} @>>>
{K_{(x,y)}} @>>> 0\\
   @V\nabla_{H^{t-1}_\mathfrak{a}(M)}VV  @VVV     @V\mu VV\\
{0 \longrightarrow H^{t-1}_\mathfrak{a}(M)} @>>>
{H^{t-1}_\mathfrak{a}({M}/{(x+y)M})} @>>> {K_{x+y}} @>>> 0,
 \end{CD}
 \]
 where $\mu$ is injective. By applying the functor $\mathrm{Hom}_R(R/\mathfrak{a}^{n_0}, \bullet)$ to the above
 diagram we get
\[
 \begin{CD}
{0 \longrightarrow
(0:_{H^{t-1}_\mathfrak{a}(M)}\mathfrak{a}^{n_0})^2} @>>>
{0:_{H^{t-1}_\mathfrak{a}(N)}\mathfrak{a}^{n_0}} @>>>
{0:_{H^{t}_\mathfrak{a}(\overline{M})}\mathfrak{a}^{n_0}} @>>> 0\\
   @V\nabla_{0:_{H^{t-1}_\mathfrak{a}(M)}\mathfrak{a}^{n_0}}VV  @VVV    @VidVV \\
{0 \longrightarrow 0:_{H^{t-1}_\mathfrak{a}(M)}\mathfrak{a}^{n_0}}
@>>> {0:_{H^{t-1}_\mathfrak{m}(M/(x+y)M)}\mathfrak{a}^{n_0}} @>>>
{0:_{H^{t}_\mathfrak{a}(\overline{M})}\mathfrak{a}^{n_0}.}
 \end{CD}
 \]
 It follows form the existence of $F^{t-1}_{n_0}$ that the bottom row is exact, and hence $F^{t-1}_{n_0,x+y}$ is
determined. Therefore
$$F^{t-1}_{n_0,x+y} = \nabla_{0:_{H^{t-1}_\mathfrak{a}(M)}\mathfrak{a}^{n_0}}F^{t-1}_{n_0}. \quad \quad (4)$$
Combining (1) and (3), we have
$$E^i_{x+y} = \nabla_{H^{i}_\mathfrak{a}(M)}(E_x^i \oplus E^i_y)\Delta_{H^{i+1}_\mathfrak{a}(\overline{M})},$$
 for all $i<t-1$. So $E^i_{x+y} = E_x^i + E^i_y$ for all
 $i<t-1$.\\
Combining (2) and (4), we have
$$F^{t-1}_{n_0,x+y} = \nabla_{0:_{H^{t-1}_\mathfrak{a}(M)}\mathfrak{a}^{n_0}}(F^{t-1}_{n_0,x}\oplus
 F^{t-1}_{n_0, y}) \Delta_{0:_{H^{t}_\mathfrak{a}(\overline{M})}\mathfrak{a}^{n_0}}.$$
 So $F^{t-1}_{n_0,x+y} = F^{t-1}_{n_0,x}+ F^{t-1}_{n_0,y}$.\\
  (ii) The first and second claims of (ii) can be shown by the same method as used in the proof of Proposition \ref{PRO3.1}, and the last one follows  immediately from the  structure of $R$-module of the extension group Ext$^1$.
\end{proof}

\section{The proof of the main result}
First of all, we need some auxiliary lemmas.
\begin{lemma} \label{LE2.1}
Let $(R, \mathfrak{m})$ be a Noetherian local ring,
$\mathfrak{a}$, $\mathfrak{b}$  ideals  and $\mathfrak{p}_1, ...,
\mathfrak{p}_n$ prime ideals such that $\mathfrak{ab} \nsubseteq
\mathfrak{p}_j$ for all $j \leq n$. Let $x$ be an element
contained in $\mathfrak{ab}$ but $x \notin \mathfrak{p}_j$ for all
$j \leq n$. Then there are elements  $a_1, ..., a_r \in
\mathfrak{a}, b_1, ..., b_r \in \mathfrak{b}$ that  we can write
$x=a_1b_1+...+ a_rb_r$ such that $a_ib_i \notin \mathfrak{p}_j$
and $a_1b_1+...+a_ib_i \notin \mathfrak{p}_j$ for all $i \leq r, j
\leq n$.
\end{lemma}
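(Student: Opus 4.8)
The plan is to induct on the number $s$ of summands in a representation $x=c_1d_1+\dots+c_sd_s$ with $c_k\in\mathfrak a$ and $d_k\in\mathfrak b$, the case $s=1$ being trivial since then $x=c_1d_1\notin\bigcup_j\mathfrak p_j$ already works. Two observations set things up. If $\mathfrak a=R$ or $\mathfrak b=R$ the assertion is immediate (take $x=1\cdot x$), so we may assume $\mathfrak a,\mathfrak b\subseteq\mathfrak m$; then $\mathfrak{ab}\subseteq\mathfrak m$, so no $\mathfrak p_j$ is maximal and each residue ring $R/\mathfrak p_j$ is an infinite domain. Also, since $\mathfrak p_j$ is prime and $\mathfrak{ab}\nsubseteq\mathfrak p_j$, we have $\mathfrak a\nsubseteq\mathfrak p_j$ and $\mathfrak b\nsubseteq\mathfrak p_j$. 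The main tool is the coset refinement of prime avoidance: \emph{if $I$ is an ideal with $I\nsubseteq\mathfrak p_j$ for all $j$ and for each $j$ finitely many residues are forbidden, then there is $e\in I$ whose class modulo $\mathfrak p_j$ avoids all forbidden residues for every $j$ at once} — for each $j$ pick $e_j\in I\cap\bigcap_{l\ne j}\mathfrak p_l$ (an ideal not contained in $\mathfrak p_j$, hence with infinite image in $R/\mathfrak p_j$) whose class dodges the finitely many bad residues, and put $e=\sum_j e_j$.

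Granting this, I would first make \emph{every} summand lie outside $\bigcup_j\mathfrak p_j$. If $c_kd_k\in\mathfrak p_j$ for some $j$, choose $e\in\mathfrak a$ with $e,\,c_k+e\notin\bigcup_j\mathfrak p_j$ and $f\in\mathfrak b$ with $f,\,d_k+f\notin\bigcup_j\mathfrak p_j$, and replace $c_kd_k$ by the four summands in
$$c_kd_k=(c_k+e)(d_k+f)+\bigl(-(c_k+e)\bigr)f+(-e)(d_k+f)+ef,$$
each a product of an $\mathfrak a$-element outside the primes by a $\mathfrak b$-element outside the primes, hence outside $\bigcup_j\mathfrak p_j$; this strictly decreases the number of offending summands, so after finitely many moves all summands are ``good.'' Next I look for an index $k$ with $x-c_kd_k=\sum_{l\ne k}c_ld_l$ still outside $\bigcup_j\mathfrak p_j$; if there is none, I split one summand as $c_md_m=(\nu c_m)d_m+\bigl((1-\nu)c_m\bigr)d_m$, choosing the scalar $\nu\in R$ by the coset avoidance so that both new summands are good, so that $x-(\nu c_m)d_m$ avoids $\bigcup_j\mathfrak p_j$, and so that after the split one summand can indeed be peeled off. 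Either way we have isolated a good summand $g$ with $x-g\notin\bigcup_j\mathfrak p_j$, represented by strictly fewer summands than $x$ once $g$ is set aside (in the split case, by the same number, but then one more step of the first kind lowers it).

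Now the induction closes: apply the inductive hypothesis to $x-g$ to get $x-g=a_1b_1+\dots+a_{r-1}b_{r-1}$ with all $a_ib_i$ and all partial sums outside $\bigcup_j\mathfrak p_j$, and set $a_rb_r:=g$. The decisive point is that $g$ is placed \emph{last}: the partial sums of the lengthened list are the previous good ones followed by $a_1b_1+\dots+a_{r-1}b_{r-1}+g=x$, which is outside $\bigcup_j\mathfrak p_j$ by hypothesis, so no partial sum can be spoiled by an ``offset.'' The part needing the most care is exactly the search for the peelable summand, together with the bookkeeping of which quantity decreases (peeling an original summand lowers $s$, while the split in the remaining case keeps $s$ fixed but forces the next step to be of the first kind, so the recursion still terminates); this is where the infinitude of the $R/\mathfrak p_j$ and the coset form of prime avoidance are genuinely used. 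Specializing the result to $\mathfrak a=\mathfrak b$ equal to a power of a fixed ideal is the case used in the proof of Theorem~\ref{th}.
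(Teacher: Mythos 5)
Your argument is correct in substance but takes a genuinely different route from the paper's. The paper first uses prime avoidance to choose a generating set $a_1b_1,\dots,a_rb_r$ of $\mathfrak{ab}$ with every generator outside all the $\mathfrak p_j$, writes $x$ in terms of it, and then runs an induction on $r$ in which the number of terms never changes: the last product is repaired by the transfer $b_r\mapsto b_r+ua_1$, $b_1\mapsto b_1-ua_r$, and the bad partial sum by adding $va_1a_rb_r$ to $x'$ and compensating inside the last term. You instead start from an arbitrary representation $x=\sum c_kd_k$, and your engine is the simultaneous coset-avoidance lemma, which is a clean reusable tool; it is valid because, after discarding the trivial cases $\mathfrak a=R$ or $\mathfrak b=R$ and reducing to pairwise incomparable $\mathfrak p_j$ (a reduction you should state explicitly, since $I\cap\bigcap_{l\ne j}\mathfrak p_l\nsubseteq\mathfrak p_j$ needs it; the paper makes the same reduction), each $R/\mathfrak p_j$ is an infinite domain. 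The four-term identity repairing a bad summand and the peel-from-the-back induction have no counterpart in the paper. Two places need to be written out for the proof to close. First, the induction should be run on the length of an \emph{all-good} representation, with the repair step performed once as preprocessing, because the repair inflates the term count and would otherwise undercut a naive induction on $s$; your parenthetical remarks show you see this, but as phrased the two inductions are conflated. Second, in the split case you must say which summand becomes peelable and why: fix any $l\ne m$ and note that the condition $x-\nu c_md_m-c_ld_l\notin\mathfrak p_j$ excludes at most one residue of $\nu$ modulo each $\mathfrak p_j$ (since $\overline{c_md_m}\ne 0$ in the domain $R/\mathfrak p_j$), so $\nu$ can be chosen to satisfy this alongside $\nu,1-\nu\notin\mathfrak p_j$ and $x-\nu c_md_m\notin\mathfrak p_j$, after which $c_ld_l$ is peelable from $x-(\nu c_m)d_m$ and the term count drops at the next step. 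With these details supplied the argument is complete; what your version buys is transparency of each individual step at the cost of a case analysis and bookkeeping, while the paper's ad hoc perturbations keep exactly $r$ terms throughout and avoid the split case entirely.
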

\begin{proof} It is sufficient to prove the assertion in the
case $\mathfrak{p}_i \nsubseteq \mathfrak{p}_j$ for all $i,j \leq
n, i \neq j$. By the Prime Avoidance Theorem, we can choose  a
system of generators  $a_1b_1, ..., a_rb_r$ of
$\mathfrak{ab}$ such that  $a_i \in \mathfrak{a}, b_i \in
\mathfrak{b}$ for all $i\leq r$, and $a_ib_i \notin
\mathfrak{p}_j$ for all $i \leq r, j\leq n$. Hence there exist $s_i
\in R, \, i=1, ...,r$ such that $x=s_1a_1b_1+...+ s_ra_rb_r$.
Rewrite $x=a_1(s_1b_1)+...+ a_r(s_rb_r)$, therefore we may assume without loss of generality
that $x$ can be written in form $x=a_1b_1+a_2b_2+ ...+a_rb_r$
with $a_i \in \mathfrak{a}, b_i \in \mathfrak{b}$ for all $i\leq
r$, and $a_i \notin
\mathfrak{p}_j$ for all $i \leq r, j\leq n$.\\
We prove the assertion by induction on $r$. The case $r=1$
 is trivial.  Assume that $r>1$ and the
 lemma is true for $r-1$. Set $J = \{j\,:\,b_r \in
 \mathfrak{p}_j\}.$
 By the Prime Avoidance Theorem we can choose $u \in
 \mathfrak{b}$ such that $u \notin \mathfrak{p}_j$ for all $j \in
 J$, and $u \in \mathfrak{p}_j$ for all $j \notin
 J$. Since  $a_1\notin
\mathfrak{p}_j$ for all $j\leq n$,   $ua_1$  also has
this property. Therefore $b_r+ua_1\notin \mathfrak{p}_j$
for all $j\leq n$.
 We write $x=a_1(b_1 - ua_r)+ a_2b_2 + ...
 +a_r(b_r + ua_1)$, so without loss of generality we can assume more that $x=a_1b_1+a_2b_2+ ...+a_rb_r$ and $a_rb_r\notin
\mathfrak{p}_j$ for all $j\leq n$.
 Let $x'= a_1b_1+ ...+a_{r-1}b_{r-1}$,
and set $J' = \{j\,:\, x' \in
 \mathfrak{p}_j\}.$
Using the Prime Avoidance Theorem  again we can choose $v \in
 \mathfrak{m}$ such that $v \notin \mathfrak{p}_j$ for all $j \in
 J'$, and $v \in \mathfrak{p}_j$ for all $j \notin J'$.
 Because $a_1, a_r, b_r\notin \mathfrak{p}_j$ for all $j\leq n$,  $va_1a_rb_r$ has the same
 property as $v$.  Set $x_{r-1}=x'+va_1a_rb_r = a_1(b_1 + va_rb_r)
+a_2b_2 + ... +
 a_{r-1}b_{r-1}$
Then  $x_{r-1}\notin \mathfrak{p}_j$ for all $j\leq n$ and
$x =x_{r-1}+  a_rb_r(1 - va_1)$. Since $a_rb_r(1 - va_1)\notin \mathfrak{p}_j$ for all $j\leq n$, the conclusion follows  from the inductive hypothesis for the element $x_{r-1}$.
\end{proof}

\begin{corollary}\label{CO2.2}
Let $(R, \mathfrak{m})$ be a Noetherian local ring and $\mathfrak{a}$
 an ideal of $R$. Let $M$ be a finitely generated
$R$-module and $x \in \mathfrak{a}^2$  an $\mathfrak{a}$-filter
regular element of $M$. Then we can find   $\mathfrak{a}$-filter regular  elements  $a_1, ..., a_r, b_1,
...,b_r \in \mathfrak{a}$ of $M$ such that $x = a_1b_1 + ... +
 a_{r}b_{r}$ and $a_1b_1 + ... +
 a_{i}b_{i}$ are also $\mathfrak{a}$-filter regular elements of $M$ for all $i \leq r$.
\end{corollary}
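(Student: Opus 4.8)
The plan is to deduce Corollary~\ref{CO2.2} directly from Lemma~\ref{LE2.1}, with essentially no extra work. First I would let $\mathfrak{p}_1,\dots,\mathfrak{p}_n$ be the (finitely many, since $M$ is finitely generated) primes in $\mathrm{Ass}\,M\setminus V(\mathfrak{a})$, i.e.\ the associated primes of $M$ not containing $\mathfrak{a}$. By the very definition recalled in Section~2, an element $y\in\mathfrak{a}$ is an $\mathfrak{a}$-filter regular element of $M$ exactly when $y\notin\mathfrak{p}_j$ for every $j\le n$. I then apply Lemma~\ref{LE2.1} with $\mathfrak{b}=\mathfrak{a}$ and this list of primes. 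Its hypotheses hold: each $\mathfrak{p}_j$ is prime and does not contain $\mathfrak{a}$, hence does not contain $\mathfrak{a}^2=\mathfrak{ab}$; and $x\in\mathfrak{a}^2$ avoids every $\mathfrak{p}_j$ precisely because $x$ is assumed $\mathfrak{a}$-filter regular for $M$.

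Lemma~\ref{LE2.1} then produces $a_1,\dots,a_r,b_1,\dots,b_r\in\mathfrak{a}$ with $x=a_1b_1+\cdots+a_rb_r$ such that $a_ib_i\notin\mathfrak{p}_j$ and $a_1b_1+\cdots+a_ib_i\notin\mathfrak{p}_j$ for all $i\le r$, $j\le n$. Each partial sum $a_1b_1+\cdots+a_ib_i$ lies in $\mathfrak{a}$ and avoids all the $\mathfrak{p}_j$, so it is an $\mathfrak{a}$-filter regular element of $M$. For the individual factors, $a_i,b_i\in\mathfrak{a}$, and from $a_ib_i\notin\mathfrak{p}_j$ and the primeness of $\mathfrak{p}_j$ one gets $a_i\notin\mathfrak{p}_j$ and $b_i\notin\mathfrak{p}_j$; hence every $a_i$ and every $b_i$ is an $\mathfrak{a}$-filter regular element of $M$ as well. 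That is exactly the assertion of the corollary.

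I would also dispose of the degenerate case $\mathrm{Ass}\,M\setminus V(\mathfrak{a})=\emptyset$ (so $n=0$) separately, though it is trivial: then every element of $\mathfrak{a}$ is $\mathfrak{a}$-filter regular, and any expression $x=\sum_{i=1}^r a_ib_i$ with $a_i,b_i\in\mathfrak{a}$, which exists because $x\in\mathfrak{a}^2$, already works. I do not expect any real obstacle here: the only substantive ingredient is Lemma~\ref{LE2.1}, and the remaining points, namely checking the hypotheses translate correctly and that a product avoiding a prime forces each factor to avoid it, are routine. The only care needed is to match notation, taking $\mathfrak{b}=\mathfrak{a}$ and $\{\mathfrak{p}_1,\dots,\mathfrak{p}_n\}=\mathrm{Ass}\,M\setminus V(\mathfrak{a})$, so that ``$\mathfrak{a}$-filter regular'' becomes precisely ``avoids all $\mathfrak{p}_j$''.
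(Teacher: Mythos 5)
Your proposal is correct and follows exactly the paper's route: the paper also proves Corollary \ref{CO2.2} by applying Lemma \ref{LE2.1} with $\mathfrak{b}=\mathfrak{a}$ and $\{\mathfrak{p}_1,\dots,\mathfrak{p}_n\}=\mathrm{Ass}\,M\setminus V(\mathfrak{a})$, leaving implicit the routine checks (hypotheses of the lemma, and that $a_ib_i\notin\mathfrak{p}_j$ forces $a_i,b_i\notin\mathfrak{p}_j$ by primeness) that you spell out.
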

\begin{proof}
It follows from Lemma \ref{LE2.1} with  $\frak a = \frak b$ and $\mathrm{Ass}(M) \setminus
V(\mathfrak{a})$ for the set of prime ideals $\{\mathfrak{p}_1,
..., \mathfrak{p}_n\}$.
\end{proof}
The following result is somehow known. The proof of this lemma follows easily from the commutativity of localizations  and
the functor Hom of finitely generated modules, therefore we omit it.
\begin{lemma}\label{CO3.3} Let $A, B, C$ are finitely generated $R$-modules.
Then the sequence
$$0 \longrightarrow A \longrightarrow B \longrightarrow C \longrightarrow 0$$
is a split exact sequence if and only if the sequence
 $$0 \longrightarrow A_{\frak m} \longrightarrow B_{\frak m} \longrightarrow C_{\frak m} \longrightarrow 0$$
 is  exact and split  for all maximal ideal $\frak m$ of $R$.
\end{lemma}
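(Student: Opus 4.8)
The plan is to treat the two implications separately and to reduce the non-trivial one to the standard local--global principle for finitely generated modules. The forward implication is immediate: localization at a prime ideal is an exact functor, so if $0\to A\to B\to C\to 0$ is split exact with splitting $s\colon C\to B$, then $0\to A_{\mathfrak{m}}\to B_{\mathfrak{m}}\to C_{\mathfrak{m}}\to 0$ is exact and $s_{\mathfrak{m}}\colon C_{\mathfrak{m}}\to B_{\mathfrak{m}}$ splits it, for every maximal ideal $\mathfrak{m}$ of $R$.

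For the converse, first recall that a complex of $R$-modules is exact if and only if it remains exact after localizing at every maximal ideal; hence the hypothesis already forces $0\to A\to B\to C\to 0$ to be exact, and it remains only to construct an $R$-linear section of the surjection $B\to C$. Let $\Phi\colon\mathrm{Hom}_R(C,B)\to\mathrm{Hom}_R(C,C)$ be the map induced by composing with $B\to C$; the sequence splits precisely when $\mathrm{id}_C\in\mathrm{im}\,\Phi$, so it suffices to prove $\mathrm{coker}\,\Phi=0$. Since $R$ is Noetherian and $B,C$ are finitely generated, $\mathrm{Hom}_R(C,B)$ and $\mathrm{Hom}_R(C,C)$ are finitely generated $R$-modules, and therefore so is $\mathrm{coker}\,\Phi$.

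At this point the compatibility of $\mathrm{Hom}$ of a finitely presented module with localization enters: for the finitely generated module $C$ over the Noetherian ring $R$ there is a natural isomorphism $\mathrm{Hom}_R(C,-)_{\mathfrak{m}}\cong\mathrm{Hom}_{R_{\mathfrak{m}}}\bigl(C_{\mathfrak{m}},(-)_{\mathfrak{m}}\bigr)$ on finitely generated modules, and since localization is exact this gives $(\mathrm{coker}\,\Phi)_{\mathfrak{m}}\cong\mathrm{coker}\bigl(\mathrm{Hom}_{R_{\mathfrak{m}}}(C_{\mathfrak{m}},B_{\mathfrak{m}})\to\mathrm{Hom}_{R_{\mathfrak{m}}}(C_{\mathfrak{m}},C_{\mathfrak{m}})\bigr)$ for each maximal ideal $\mathfrak{m}$. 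By hypothesis $0\to A_{\mathfrak{m}}\to B_{\mathfrak{m}}\to C_{\mathfrak{m}}\to 0$ is split exact, so $B_{\mathfrak{m}}\to C_{\mathfrak{m}}$ is a split surjection; as any additive functor --- in particular $\mathrm{Hom}_{R_{\mathfrak{m}}}(C_{\mathfrak{m}},-)$ --- carries a split surjection to a split surjection, the displayed cokernel vanishes. Thus $(\mathrm{coker}\,\Phi)_{\mathfrak{m}}=0$ for every maximal ideal $\mathfrak{m}$, and since $\mathrm{coker}\,\Phi$ is finitely generated we conclude $\mathrm{coker}\,\Phi=0$. Hence $\Phi$ is surjective, $\mathrm{id}_C$ lifts to some $s\colon C\to B$ with $(B\to C)\circ s=\mathrm{id}_C$, and the original sequence is split exact.

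The only step requiring any care --- the ``hard part'', though it is entirely elementary --- is the compatibility of $\mathrm{Hom}$ with localization, which rests on $C$ being finitely presented (automatic over the Noetherian ring $R$); once that is granted, the argument uses nothing beyond the fact that a finitely generated module vanishing at every maximal ideal is zero. One could equally phrase the splitting step through the extension class: the sequence determines an element $\xi\in\mathrm{Ext}^1_R(C,A)$, this group is finitely generated and commutes with localization, $\xi_{\mathfrak{m}}$ is the class of the localized extension and hence zero by hypothesis, so $\xi=0$ and the sequence splits.
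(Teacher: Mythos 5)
Your proof is correct and follows essentially the route the paper itself indicates: the paper omits the argument, remarking only that it follows from the compatibility of localization with $\mathrm{Hom}$ of finitely generated modules, which is precisely the key step you carry out via $\mathrm{coker}\,\Phi$ (or, equivalently, your $\mathrm{Ext}^1$ variant). Nothing further is needed.
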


\noindent {\it Proof of Theorem \ref{th}}. Keep all notations as
in Section 3 with $U = H^{0}_\mathfrak{a}(M) $. Then every
$\mathfrak{a}$-filter regular element $x \in \mathfrak{a}^{n_0}$
satisfies  the condition $(\sharp)$. Since $H^{i}_\mathfrak{a}(M)
\cong H^{i}_\mathfrak{a}(\overline{M})$ for all $i>0$,
$H^{i}_\mathfrak{a}(\overline{M})$ is finitely generated, so is
$0:_{H^{t}_\mathfrak{a}(\overline{M})} \frak{a}^{n_0}$ by Theorem
1.2, \cite{AKS}.  Using localizations at maximal ideals  we may
assume by Lemma \ref{CO3.3}  that $(R,\mathfrak{m})$ is a
Noetherian local ring. Let $x\in \frak a^{2n_0}$. There are by
Corollary \ref{CO2.2} $\mathfrak{a}$-filter regular elements $a_i,
b_i \in \mathfrak{a}^{n_0}$, $i \leq r$
 such that $x=a_1b_1+...+ a_rb_r$ and  $a_1b_1+...+a_jb_j$ are $\frak a$-filter
regular elements for all $1 \leq j \leq r$. Then, by virtue of Theorem
\ref{T3.2} (i) we have
$$E^i_x   = E^i_{a_1b_1+...+ a_rb_r}
        =E^i_{a_1b_1} + E^i_{a_2b_2} + \cdots + E^i_{a_rb_r}.$$
 Therefore $$E^i_x = a_1E^i_{b_1} + a_2E^i_{b_2} + \cdots + a_rE^i_{b_r} = 0$$  by Theorem
\ref{T3.2} (ii) for all  $0\leq i<t-1$.  Thus  we have
$$H^{i}_\mathfrak{a}(M/xM)\cong H^{i}_\mathfrak{a}(M)\oplus H^{i+1}_\mathfrak{a}(\overline M)\cong H^{i}_\mathfrak{a}(M)\oplus H^{i+1}_\mathfrak{a}(M)$$
for all $0\leq i <t-1$. On the other hand,
 by Proposition \ref{PRO3.1}
$F^{t-1}_{n_0,a_jb_j}$ are determined for all $j\leq r$. It
follows by Theorem \ref{T3.2} (i) that
 $F^{t-1}_{n_0,x}=F^{t-1}_{n_0,a_1b_1+ \cdots +a_rb_r}$ is determined and
$$F^{t-1}_{n_0,x} = F^{t-1}_{n_0,a_1b_1}+ \cdots +F^{t-1}_{n_0,a_rb_r}.$$
Therefore  $F^{t-1}_{n_0,x} = 0$ by Theorem \ref{T3.2} (ii), so
$$0:_{H^{t-1}_\mathfrak{a}({M}/{xM})}\mathfrak{a}^{n_0} \cong
H^{t-1}_\mathfrak{a}(M)\oplus
0:_{H^{t}_\mathfrak{a}(M)}\mathfrak{a}^{n_0},$$ since $0:_{H^{t-1}_\mathfrak{a}(M)} \mathfrak{a}^{n_0} =
H^{t-1}_\mathfrak{a}(M)$.

\hspace*{14.5cm}$\Box$

\section{Some applications}
The first immediate consequence of Theorem \ref{th} is an
affirmative complete  answer for the question posed in the
introduction.
\begin{corollary}\label{T1.1}
Let $M$ be a generalized Cohen-Macaulay module over a local ring $(R,\frak m)$ of dimension $d>0$,
and $n_0$ the least positive integer such that
$\mathfrak{m}^{n_0}H^{i}_\mathfrak{m}(M) = 0$ for all $i <d$.  Then
for any parameter element $x \in \mathfrak{m}^{2n_0}$, we have
$$H^{i}_\mathfrak{m}(M/xM)\cong H^{i}_\mathfrak{m}(M)\oplus H^{i+1}_\mathfrak{m}(M),$$
 for all $i<d-1$, and
$$0:_{H^{d-1}_\mathfrak{m}(M/xM)}\mathfrak{m}^{n_0} \cong
H^{d-1}_\mathfrak{m}(M)\oplus
0:_{H^{d}_\mathfrak{m}(M)}\mathfrak{m}^{n_0}.$$
\end{corollary}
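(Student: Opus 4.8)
The plan is to obtain the corollary as the immediate specialization of Theorem \ref{th} to the case $\mathfrak{a} = \mathfrak{m}$ and $t = d$. Essentially everything is already contained in that theorem, so only two small points have to be checked: that the hypothesis of Theorem \ref{th} holds here, and that a parameter element lying in $\mathfrak{m}^{2n_0}$ is an $\mathfrak{m}$-filter regular element.

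For the first point, recall that $M$ being a generalized Cohen-Macaulay module of dimension $d$ means there is a positive integer $k$ with $\mathfrak{m}^{k}H^{i}_\mathfrak{m}(M) = 0$ for all $i < d$, and $n_0$ is by assumption the least such integer; in particular $\mathfrak{m}^{n_0}H^{i}_\mathfrak{m}(M) = 0$ for all $i < d$. This is precisely the requirement ``$\mathfrak{a}^{n_0}H^{i}_\mathfrak{a}(M) = 0$ for all $i < t$'' of Theorem \ref{th}, read with $\mathfrak{a} = \mathfrak{m}$ and $t = d$.

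For the second point, which is the only genuinely non-formal step, I would observe that every parameter element $x \in \mathfrak{m}^{2n_0}$ of $M$ is an $\mathfrak{m}$-filter regular element of $M$. Since $R$ is local, $V(\mathfrak{m}) = \{\mathfrak{m}\}$, so $\mathfrak{m}$-filter regularity of $x$ means exactly that $x \notin \mathfrak{p}$ for every $\mathfrak{p} \in \mathrm{Ass}(M) \setminus \{\mathfrak{m}\}$. Being a parameter element, $x$ satisfies $\dim M/xM = d - 1$, and since $\mathrm{Supp}(M/xM) = \mathrm{Supp}(M) \cap V(x)$ this forces $x \notin \mathfrak{p}$ for every $\mathfrak{p} \in \mathrm{Supp}(M)$ with $\dim R/\mathfrak{p} = d$; such primes are precisely the top-dimensional minimal primes of $\mathrm{Supp}(M)$ and hence belong to $\mathrm{Ass}(M)$. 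It then remains to invoke the well-known fact recorded in the paragraph following Theorem \ref{th}: for a generalized Cohen-Macaulay module every associated prime of $M$ other than $\mathfrak{m}$ has dimension $d$. Combining the two observations, $x$ avoids all of $\mathrm{Ass}(M) \setminus \{\mathfrak{m}\}$, so $x$ is an $\mathfrak{m}$-filter regular element of $M$ lying in $\mathfrak{m}^{2n_0}$.

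With both points settled, Theorem \ref{th} applies verbatim: for every parameter element $x \in \mathfrak{m}^{2n_0}$ of $M$ it gives $H^{i}_\mathfrak{m}(M/xM) \cong H^{i}_\mathfrak{m}(M) \oplus H^{i+1}_\mathfrak{m}(M)$ for all $i < d - 1$, together with $0 :_{H^{d-1}_\mathfrak{m}(M/xM)} \mathfrak{m}^{n_0} \cong H^{d-1}_\mathfrak{m}(M) \oplus 0 :_{H^{d}_\mathfrak{m}(M)} \mathfrak{m}^{n_0}$, which is exactly the assertion of the corollary. I expect the filter-regularity verification of the second step to be the only place where something substantive is used; everything else is just a matter of matching the notation of the corollary with that of Theorem \ref{th}.
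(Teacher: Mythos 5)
Your proposal is correct and follows essentially the same route as the paper: the corollary is deduced by specializing Theorem \ref{th} to $\mathfrak{a}=\mathfrak{m}$, $t=d$, using the well-known fact (which the paper simply cites right after Theorem \ref{th}) that every parameter element of a generalized Cohen--Macaulay module is $\mathfrak{m}$-filter regular. Your sketch of that fact, via the observation that all associated primes of $M$ other than $\mathfrak{m}$ have dimension $d$ and are therefore minimal in $\mathrm{Supp}(M)$ and avoided by any parameter element, is a sound justification of what the paper leaves unproved.
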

The next application of Theorem \ref {th} is somehow strange to the authors and it can be used to derive many consequences.
\begin{corollary}\label{LE4.1} Let $M$ be a finitely generated module over a Noetherian ring $R$ and  $\mathfrak{a}$ an
ideal of  $R$. Let $t, n_0$ be positive integers such that
$\mathfrak{a}^{n_0}H^{i}_\mathfrak{a}(M) = 0$ for all $i<t$. Let
$x_1, ..., x_t$ be an $\frak a$-filter regular sequence of $M$
contained in $\mathfrak{a}^{2n_0}$. Then for all positive integer
$k \leq n_0$ and all $j = 1, \ldots , t$, $\mathrm{Hom}_R(R/{\frak
a}^k, M/(x_1,...,x_j)M) $ are independent of the choice of the
sequence $x_1, ..., x_j$. Moreover,  we have
$$\mathrm{Hom}_R(R/{\frak a}^k, M/(x_1,...,x_j)M) \cong \bigoplus_{i=0}^j
\mathrm{Hom}_R(R/{\frak a}^k,
H^{i}_\mathfrak{a}(M))^{\binom{j}{i}}.$$
\end{corollary}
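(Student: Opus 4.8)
The plan is to induct on $j$, writing $M_j:=M/(x_1,\ldots,x_j)M$ (so $M_0=M$ and $M_j=M_{j-1}/x_jM_{j-1}$) and feeding everything through Theorem \ref{th}. The preliminary observation is that the hypotheses descend along the sequence: since $x_1,\ldots,x_t$ is an $\mathfrak{a}$-filter regular sequence of $M$, each $x_j\in\mathfrak{a}^{2n_0}$ is an $\mathfrak{a}$-filter regular element of $M_{j-1}$, and a forward induction on $j$ using only Theorem \ref{th} shows that $\mathfrak{a}^{n_0}H^i_\mathfrak{a}(M_l)=0$ for all $l$ and all $i<t-l$ (the base case $l=0$ being the hypothesis on $M$, the step applying Theorem \ref{th} to $M_{l-1}$). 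Consequently Theorem \ref{th} applies to $M_{j-1}$ with $t$ replaced by $t-j+1$, giving $H^i_\mathfrak{a}(M_j)\cong H^i_\mathfrak{a}(M_{j-1})\oplus H^{i+1}_\mathfrak{a}(M_{j-1})$ for $i<t-j$, together with $0:_{H^{t-j}_\mathfrak{a}(M_j)}\mathfrak{a}^{n_0}\cong H^{t-j}_\mathfrak{a}(M_{j-1})\oplus 0:_{H^{t-j+1}_\mathfrak{a}(M_{j-1})}\mathfrak{a}^{n_0}$.

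Next I would pass to $\mathfrak{a}^{n_0}$-torsion to merge these two displays into a single recursion. For $0\le i\le t-j$ put $h^i_j:=0:_{H^i_\mathfrak{a}(M_j)}\mathfrak{a}^{n_0}$. Using the propagation statement to recognize which cohomology modules at level $j-1$ are already annihilated by $\mathfrak{a}^{n_0}$ (so that $0:_{\bullet}\mathfrak{a}^{n_0}$ acts as the identity on them), both displays collapse to the uniform isomorphism $h^i_j\cong h^i_{j-1}\oplus h^{i+1}_{j-1}$ for every $0\le i\le t-j$; at the top index $i=t-j$ this is precisely the second display, with $h^{t-j+1}_{j-1}=0:_{H^{t-(j-1)}_\mathfrak{a}(M_{j-1})}\mathfrak{a}^{n_0}$ being the top term available at level $j-1$. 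Unwinding this binomial recursion down to $j=0$ yields $h^0_j\cong\bigoplus_{l=0}^{j}(h^l_0)^{\binom{j}{l}}$, where $h^l_0=0:_{H^l_\mathfrak{a}(M)}\mathfrak{a}^{n_0}$ (and $l$ runs only up to $j\le t$, so every $h^l_0$ is defined).

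The final step is formal. Any element of $M_j$ killed by $\mathfrak{a}^k$ is $\mathfrak{a}$-torsion, hence $\mathrm{Hom}_R(R/\mathfrak{a}^k,M_j)=0:_{M_j}\mathfrak{a}^k=0:_{H^0_\mathfrak{a}(M_j)}\mathfrak{a}^k$; and since $k\le n_0$ gives $\mathfrak{a}^{n_0}\subseteq\mathfrak{a}^k$, for any $R$-module $N$ one has $0:_N\mathfrak{a}^k=0:_{(0:_N\mathfrak{a}^{n_0})}\mathfrak{a}^k$. Applying this identity first with $N=H^0_\mathfrak{a}(M_j)$ and then, summand by summand, with $N=H^l_\mathfrak{a}(M)$, the isomorphism $h^0_j\cong\bigoplus_l(h^l_0)^{\binom{j}{l}}$ turns into $\mathrm{Hom}_R(R/\mathfrak{a}^k,M_j)\cong\bigoplus_{l=0}^{j}\big(0:_{H^l_\mathfrak{a}(M)}\mathfrak{a}^k\big)^{\binom{j}{l}}=\bigoplus_{l=0}^{j}\mathrm{Hom}_R(R/\mathfrak{a}^k,H^l_\mathfrak{a}(M))^{\binom{j}{l}}$, which is the asserted formula; independence of the chosen sequence follows at once, the right-hand side depending only on $M$, $\mathfrak{a}$, $k$ and $j$.

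The only genuinely delicate point is the second step. Theorem \ref{th} provides the clean direct-sum decomposition of $H^i_\mathfrak{a}(M_j)$ only in the range $i<t-j$, and at the boundary index $i=t-j$ only the ``truncated'' statement about $0:_{\bullet}\mathfrak{a}^{n_0}$; so one has to verify, level by level, that restricting to $\mathfrak{a}^{n_0}$-torsion --- harmless here exactly because $k\le n_0$ --- makes the boundary case satisfy the same recursion as the interior cases, and that the index ranges match up under the shift $t\mapsto t-j+1$ performed at each stage.
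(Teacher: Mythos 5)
Your proof is correct and follows essentially the same route as the paper: an induction on $j$ driven by Theorem \ref{th}, using the fact that the hypothesis $\mathfrak{a}^{n_0}H^i_\mathfrak{a}=0$ descends to the successive quotients and that the boundary index $i=t-j$ is handled by the socle statement $0:_{H^{t-j}_\mathfrak{a}(M_j)}\mathfrak{a}^{n_0}\cong H^{t-j}_\mathfrak{a}(M_{j-1})\oplus 0:_{H^{t-j+1}_\mathfrak{a}(M_{j-1})}\mathfrak{a}^{n_0}$, which is exactly how the paper's induction (peeling off $x_1$ and applying $\mathrm{Hom}_R(R/\mathfrak{a}^k,\bullet)$) works. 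Your only cosmetic deviation is to carry the $\mathfrak{a}^{n_0}$-socles $h^i_j$ through the recursion and specialize to $\mathfrak{a}^k$-torsion at the end, rather than working with $\mathrm{Hom}_R(R/\mathfrak{a}^k,\bullet)$ throughout; the bookkeeping you spell out (range shifts $t\mapsto t-j+1$ and the collapse at the top index) is sound.
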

\begin{proof}
We proceed  by induction on $j$. From Theorem \ref{th} we have
$$
\mathrm{Hom}_R(R/{\frak a}^k, H_{\frak a}^i(M/x_1M)) \cong
\mathrm{Hom}_R(R/{\frak a}^k, H^{i}_\mathfrak{a}(M)) \oplus
\mathrm{Hom}_R(R/{\frak a}^k, H^{i+1}_\mathfrak{a}(M))$$ for all
$i\leq t-1$. Therefore
\begin{eqnarray*}
\mathrm{Hom}_R(R/{\frak a}^k, M/(x_1)M) &\cong&
\mathrm{Hom}_R(R/{\frak a}^k, H_{\frak a}^0(M/x_1M)) \\
&\cong& \mathrm{Hom}_R(R/{\frak a}^k, H^{0}_\mathfrak{a}(M)) \oplus
\mathrm{Hom}_R(R/{\frak a}^k, H^{1}_\mathfrak{a}(M)),
\end{eqnarray*}
and the corollary is proved for $j=1$. Suppose that $j>1$. By Theorem \ref{th} we have
$\mathfrak{a}^{n_0}H^{i}_\mathfrak{a}(M/x_1M) = 0$ for all
$i<t-1$. It follows from the inductive hypothesis for the sequence $x_2, ..., x_j$ and $M/x_1M$ that
\begin{eqnarray*}
\mathrm{Hom}_R(R/{\frak a}^k, M/(x_1,...,x_j)M) &\cong&
\bigoplus_{i=0}^{j-1} \mathrm{Hom}_R(R/{\frak a}^k,
H^{i}_\mathfrak{a}(M/x_1M))^{\binom{j-1}{i}}\\
&\cong& \bigoplus_{i=0}^j\mathrm{Hom}_R(R/{\frak a}^k,
H^{i}_\mathfrak{a}(M))^{\binom{j}{i}}
\end{eqnarray*}
as required.
\end{proof}
Let   $M$ be a finitely generated module over a Noetherian local
ring $(R, \frak m)$ and $\mathfrak{q}$ a parameter ideal of $M$.
The  index of irreducibility of  $\mathfrak{q}$ on $M$ is  defined
by $N_R(\mathfrak{q},M) =
\dim_{R/\mathfrak{m}}\mathrm{Soc}(M/\mathfrak{q}M)$, where
$\mathrm{Soc}(N) \cong 0:_N \mathfrak{m} \cong
\mathrm{Hom}(R/\mathfrak{m}, N)$ for an arbitrary $R$-module $N$.
It is well-known that if $M$ is a Cohen-Macaulay module then
$N_R(\mathfrak{q},M)$ is a constant independent of the choice of
$\mathfrak{q}$. In the case $M$ is a Buchsbaum module, S.  Goto
and H. Sakurai proved in \cite{GS}  that for large enough $n$ the
index of irreducibility $N_R(\mathfrak{q},M)$ is a constant for
all parameter ideals  $\mathfrak{q}$ contained in $\mathfrak{m}^n
$. And they conjectured that this result is also true for
generalized Cohen-Macaulay modules.   H.L. Truong and the first
author  have given an affirmative answer for this conjecture in
\cite{CT}. Now, in virtue of Corollary  \ref {LE4.1} we can prove
a statement which is a slight generalization of the main result of
\cite {CT} as follows.
\begin{corollary} \label{CO1.2}
Let $M$ be a generalized Cohen-Macaulay module of dimension $d$
and $n_0$  a positive integer such that
$\mathfrak{m}^{n_0}H^{i}_\mathfrak{m}(M) = 0$ for all $i<d$. Then,
for every parameter ideal  $\mathfrak{q}$ of $M$ contained in
$\mathfrak{m}^{2n_0}$ and $k \leq n_0$,  the length
$\ell_R\big(({\mathfrak{q}M:_M
\mathfrak{m}^k})/{\mathfrak{q}M}\big)$  is independent of the
choice of $\frak q$ and given by
$$\ell_R\big(({\mathfrak{q}M:_M
\mathfrak{m}^k})/{\mathfrak{q}M}\big) =
\sum_{i=0}^{d}\binom{d}{i}\ell_R(0:_{H^{i}_\mathfrak{m}(M)}\mathfrak{m}^k)
.$$ In
particular, the index of irreducibility  $N_R(\mathfrak{q},M)$ is a constant
independent of the choice of $\mathfrak{q}$ and
$$N_R(\mathfrak{q},M)=
\sum_{i=0}^{d}\binom{d}{i}\dim_{R/\mathfrak{m}}
\mathrm{Soc}({H^{i}_\mathfrak{m}(M)}).$$
\end{corollary}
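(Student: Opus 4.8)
The plan is to deduce everything from Corollary \ref{LE4.1} applied with $\mathfrak{a}=\mathfrak{m}$ and $t=d$. First I would fix a parameter ideal $\mathfrak{q}\subseteq\mathfrak{m}^{2n_0}$ of $M$ and choose a minimal system of generators $x_1,\dots,x_d$ of $\mathfrak{q}$; these form a system of parameters of $M$ and all lie in $\mathfrak{q}\subseteq\mathfrak{m}^{2n_0}$. The key preliminary observation is that $x_1,\dots,x_d$ is an $\mathfrak{m}$-filter regular sequence of $M$: since $M$ is generalized Cohen--Macaulay, $M_{\mathfrak{p}}$ is Cohen--Macaulay for every non-maximal $\mathfrak{p}\in\mathrm{Supp}\,M$, so a system of parameters avoids every associated prime of $M/(x_1,\dots,x_{j-1})M$ other than $\mathfrak{m}$; this is the standard fact that every system of parameters of a generalized Cohen--Macaulay module is filter regular. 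Since $\mathfrak{m}^{n_0}H^i_{\mathfrak{m}}(M)=0$ for all $i<d$, all hypotheses of Corollary \ref{LE4.1} are satisfied with $t=d$.

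Next I would take $j=d$ and an arbitrary $k\le n_0$ in Corollary \ref{LE4.1}, obtaining
$$\mathrm{Hom}_R(R/\mathfrak{m}^k,\,M/\mathfrak{q}M)\;\cong\;\bigoplus_{i=0}^{d}\mathrm{Hom}_R(R/\mathfrak{m}^k,\,H^i_{\mathfrak{m}}(M))^{\binom{d}{i}}.$$
Now I identify $\mathrm{Hom}_R(R/\mathfrak{m}^k,N)\cong 0:_N\mathfrak{m}^k$ for any $R$-module $N$: applied to $N=M/\mathfrak{q}M$ the left-hand side becomes $(\mathfrak{q}M:_M\mathfrak{m}^k)/\mathfrak{q}M$, and applied to $N=H^i_{\mathfrak{m}}(M)$ each summand becomes $0:_{H^i_{\mathfrak{m}}(M)}\mathfrak{m}^k$. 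For $i<d$ the module $H^i_{\mathfrak{m}}(M)$ is Artinian and annihilated by $\mathfrak{m}^{n_0}$, hence of finite length; for $i=d$ the module $H^d_{\mathfrak{m}}(M)$ is Artinian, so $0:_{H^d_{\mathfrak{m}}(M)}\mathfrak{m}^k$ is Artinian and annihilated by $\mathfrak{m}^k$, hence again of finite length. Taking $R$-lengths in the displayed isomorphism therefore yields
$$\ell_R\big((\mathfrak{q}M:_M\mathfrak{m}^k)/\mathfrak{q}M\big)=\sum_{i=0}^{d}\binom{d}{i}\,\ell_R\big(0:_{H^i_{\mathfrak{m}}(M)}\mathfrak{m}^k\big),$$
and since the right-hand side involves only $M$, $\mathfrak{m}$ and $k$, it is independent of the choice of $\mathfrak{q}$.

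For the statement on the index of irreducibility I would specialize to $k=1$, which is legitimate because $n_0\ge 1$. Then $(\mathfrak{q}M:_M\mathfrak{m})/\mathfrak{q}M=0:_{M/\mathfrak{q}M}\mathfrak{m}\cong\mathrm{Soc}(M/\mathfrak{q}M)$, whose $R$-length equals $N_R(\mathfrak{q},M)=\dim_{R/\mathfrak{m}}\mathrm{Soc}(M/\mathfrak{q}M)$, while $0:_{H^i_{\mathfrak{m}}(M)}\mathfrak{m}\cong\mathrm{Soc}(H^i_{\mathfrak{m}}(M))$ has length $\dim_{R/\mathfrak{m}}\mathrm{Soc}(H^i_{\mathfrak{m}}(M))$. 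Substituting these into the length formula gives the asserted expression for $N_R(\mathfrak{q},M)$, which is again independent of $\mathfrak{q}$.

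The point to stress is that essentially all of the work has already been absorbed into Corollary \ref{LE4.1} (and, through it, Theorem \ref{th}); the only genuinely new issues here are the bookkeeping reductions — that a system of parameters contained in a high power of $\mathfrak{m}$ is an $\mathfrak{m}$-filter regular sequence of a generalized Cohen--Macaulay module, and that the relevant modules $0:_{H^i_{\mathfrak{m}}(M)}\mathfrak{m}^k$ have finite length so the length count is meaningful. Both are routine, so I do not expect any serious obstacle beyond unwinding the notation.
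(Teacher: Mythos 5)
Your proof is correct and follows essentially the same route as the paper: apply Corollary \ref{LE4.1} with $\mathfrak{a}=\mathfrak{m}$, $t=j=d$ to a system of parameters generating $\mathfrak{q}$ (which is $\mathfrak{m}$-filter regular since $M$ is generalized Cohen--Macaulay), identify $\mathrm{Hom}_R(R/\mathfrak{m}^k,-)$ with $0:_{(-)}\mathfrak{m}^k$, and take lengths, with $k=1$ giving the socle statement. You merely spell out the routine checks (filter regularity of the parameter sequence, finiteness of the lengths) that the paper leaves implicit.
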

\begin{proof}
It follows immediately from Corollary \ref{LE4.1} and the fact
that \\
$\mathrm{Hom}_R(R/{\frak m}^k, M/\frak qM) \cong
({\mathfrak{q}M:_M \mathfrak{m}^k})/{\mathfrak{q}M} $  and
$\mathrm{Hom}_R(R/{\frak m}^k, H^{i}_\mathfrak{m}(M))\cong
0:_{H^{i}_\mathfrak{m}(M)}\mathfrak{m}^k$
 for all $i$.
\end{proof}
  In \cite{H} C. Huneke conjectured that  the set $\mathrm{Ass}\, H^i_\mathfrak{a}(M)$  is a finite set for  any ideal $\frak a$ and all $i$.
 The conjecture was settled by G. Lyubeznik \cite{L} and  C. Huneke - R.Y. Sharp \cite{HS} for  regular local rings containing a field.
Although M.  Katzman \cite{K} has given an example of a Noetherian
ring and an ideal $\mathfrak{a}$ such that $H^2_\mathfrak{a}(R)$
has infinitely many associated primes, the conjecture is still
true in many interesting cases.  The following result is an
immediate consequence of Corollary \ref{LE4.1}, which is an
extension of the  main result of  M. Brodmann and A.L. Faghani in
\cite{BF}.
\begin{corollary}
Let  $M$ be a finitely generated $R$-module and $\mathfrak{a}$  an
ideal of $R$. Let $t$ and $n_0$ be positive integers such that
$\mathfrak{a}^{n_0}H^{i}_\mathfrak{a}(M) = 0$ for all $i<t$.  Then
for every $\mathfrak{a}$-filter regular sequence $x_1, ..., x_t$
of $M$ contained in $\mathfrak{a}^{2n_0}$, we have
$$\bigcup_{i=0}^j\mathrm{Ass}\, H^i_\mathfrak{a}(M) = \mathrm{Ass}\,(M/(x_1,...,x_jM)) \bigcap V(\mathfrak{a})$$
for all $j= 1, \ldots , t$.
In particular, $H^t_\mathfrak{a}(M)$ has only finitely many
associated primes.
\end{corollary}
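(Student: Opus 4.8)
The plan is to deduce the statement from Corollary \ref{LE4.1} simply by applying the functor $\mathrm{Ass}$. The only external ingredient I need is the elementary fact that for \emph{any} $R$-module $N$,
$$\mathrm{Ass}_R\big(\mathrm{Hom}_R(R/\mathfrak{a},N)\big)=\mathrm{Ass}_R(N)\cap V(\mathfrak{a}).$$
Indeed $\mathrm{Hom}_R(R/\mathfrak{a},N)=0:_N\mathfrak{a}$; if $\mathfrak{p}=(0:_R y)$ with $0\neq y\in 0:_N\mathfrak{a}$ then $\mathfrak{a}y=0$ forces $\mathfrak{a}\subseteq\mathfrak{p}$, and conversely if $\mathfrak{p}=(0:_R y)\in\mathrm{Ass}_R(N)$ with $\mathfrak{a}\subseteq\mathfrak{p}$ then $y\in 0:_N\mathfrak{a}$. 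In the special case where $N$ is $\mathfrak{a}$-torsion one has $\mathrm{Ass}_R(N)\subseteq V(\mathfrak{a})$, so the formula becomes $\mathrm{Ass}_R\big(\mathrm{Hom}_R(R/\mathfrak{a},N)\big)=\mathrm{Ass}_R(N)$; this applies in particular to $N=H^i_\mathfrak{a}(M)$.

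First I would invoke Corollary \ref{LE4.1} with $k=1$ (admissible since $n_0\geq 1$), which yields, for each $j=1,\dots,t$,
$$\mathrm{Hom}_R(R/\mathfrak{a},\,M/(x_1,\dots,x_j)M)\;\cong\;\bigoplus_{i=0}^{j}\mathrm{Hom}_R\big(R/\mathfrak{a},\,H^i_\mathfrak{a}(M)\big)^{\binom{j}{i}}.$$
Now I apply $\mathrm{Ass}$ to both sides. Since $\mathrm{Ass}$ carries a finite direct sum to the union of the $\mathrm{Ass}$'s and the multiplicities $\binom{j}{i}\geq 1$ have no effect, the right-hand side becomes $\bigcup_{i=0}^{j}\mathrm{Ass}_R\big(\mathrm{Hom}_R(R/\mathfrak{a},H^i_\mathfrak{a}(M))\big)=\bigcup_{i=0}^{j}\mathrm{Ass}\,H^i_\mathfrak{a}(M)$, using the $\mathfrak{a}$-torsion version of the fact above. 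The left-hand side becomes $\mathrm{Ass}_R\big(M/(x_1,\dots,x_j)M\big)\cap V(\mathfrak{a})$ by the general version of the fact applied to $N=M/(x_1,\dots,x_j)M$. Equating the two gives exactly the claimed identity.

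For the final assertion I would take $j=t$: the identity reads $\bigcup_{i=0}^{t}\mathrm{Ass}\,H^i_\mathfrak{a}(M)=\mathrm{Ass}_R\big(M/(x_1,\dots,x_t)M\big)\cap V(\mathfrak{a})$, and the right-hand side is finite because $M/(x_1,\dots,x_t)M$ is a finitely generated module over the Noetherian ring $R$; hence $\mathrm{Ass}\,H^t_\mathfrak{a}(M)$ is finite.

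I do not expect any genuine obstacle here: the substantive work is already contained in Corollary \ref{LE4.1} (hence ultimately in Theorem \ref{th} and Theorem \ref{T3.2}). The only points demanding a modicum of care are recording the standard lemma $\mathrm{Ass}(0:_N\mathfrak{a})=\mathrm{Ass}(N)\cap V(\mathfrak{a})$ and noting that $k=1$ is a permitted choice in Corollary \ref{LE4.1}; the remainder is a purely formal manipulation of associated primes.
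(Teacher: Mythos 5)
Your proposal is correct and follows essentially the same route as the paper: both deduce the identity from Corollary \ref{LE4.1} with $k=1$ by taking associated primes, using $\mathrm{Ass}\,H^i_\mathfrak{a}(M)=\mathrm{Ass}\,\mathrm{Hom}_R(R/\mathfrak{a},H^i_\mathfrak{a}(M))$ for the $\mathfrak{a}$-torsion modules and $\mathrm{Ass}(0:_N\mathfrak{a})=\mathrm{Ass}(N)\cap V(\mathfrak{a})$ on the other side. Your explicit remark that the finiteness of $\mathrm{Ass}\,H^t_\mathfrak{a}(M)$ follows because $M/(x_1,\dots,x_t)M$ is finitely generated is exactly the (implicit) final step of the paper's argument.
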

\begin{proof}  Since $H^i_\mathfrak{a}(M)$ is
$\frak a$-torsion,  $\mathrm{Ass}\, H^i_\mathfrak{a}(M) =
\mathrm{Ass}\, \mathrm{Hom}_R(R/{\frak a},
H^{i}_\mathfrak{a}(M))$.  It follows from Corollary \ref{LE4.1} that for all $j= 1, \ldots , t$,
$$\bigcup_{i=0}^j \mathrm{Ass}\, H^i_\mathfrak{a}(M) = \mathrm{Ass}\, H^0_\mathfrak{a}(M/(x_1,...,x_j)M )=\mathrm{Ass}\,(M/(x_1,...,x_j)M) \cap V(\mathfrak{a}).$$
\end{proof}

\textsc{Institute of Mathematics, 18 Hoang Quoc Viet Road, 10307
Hanoi, Viet Nam}\\
{\it E-mail address}: ntcuong@math.ac.vn\\
\textsc{Department of Mathematics, FPT University (Dai Hoc FPT), 8 Ton That Thuyet Road, Ha Noi, Viet Nam}\\
 {\it E-mail address}: phamhungquy@gmail.com {\it or} quyph@fpt.edu.vn
\end{document}